\def\*#1{\mathcal{#1}}
\theoremstyle{definition}
\newtheorem{theorem}{Theorem}
\newtheorem{lemma}{Lemma}
\newtheorem{corollary}{Corollary}
\newtheorem{proposition}{Proposition}
\newtheorem{definition}{Definition}
\newtheorem{remark}{Remark}
\newtheorem{problem}{Problem}
\newtheorem{notation}{Notation}
\newcommand{\ind}{\perp\!\!\!\perp}
\newcommand{\leftcomingarrow}{\leftarrow \! \! \! \! \bullet}
\newcommand{\rightcomingarrow}{\bullet \! \! \! \! \rightarrow}
\newcommand{\notind}{\slashed{\ind}}
\newcommand{\V}{\mathcal{V}}
\newcommand{\E}{\mathcal{E}}
\newcommand{\X}{\mathcal{X}}
\newcommand{\Y}{\mathcal{Y}}
\newcommand{\Z}{\mathcal{Z}}
\newcommand{\Anc}{\text{Anc}}
\newcommand{\Desc}{\text{Desc}}
\newcommand{\G}{{\mathcal{G}}}
\newcommand{\Gm}{{\G^m}}
\newcommand{\Gc}{{\G^C}}
\newcommand{\C}{\mathcal{C}}
\newcommand{\U}{\mathcal{U}}
\newcommand{\Gcu}{{{\G^{m,C}_{\text{u}}}}}
\newcommand{\Gt}{{\G^m_{\min}}}
\newcommand{\Root}{\mathrm{Root}}
\newcommand{\A}{\mathcal{A}}
\newcommand{\B}{\mathcal{B}}
\newcommand{\F}{\mathcal{F}}
\newcommand{\dashleftselfloop}{
    \!\!\!
    \begin{tikzpicture}[baseline=(X.base)]
        \node (X) at (0,0) {$\phantom{X}$};
        \path (X) edge [<->, loop left, looseness=4, in=155, out=205, dashed] node {} (X);
    \end{tikzpicture}
    \!\!\!\!\!\!\!
}
\newcommand{\leftselfloop}{
    \!\!\!
    \begin{tikzpicture}[baseline=(X.base)]
        \node (X) at (0,0) {$\phantom{X}$};
        \path (X) edge [->, loop left, looseness=4, in=155, out=205] node {} (X);
    \end{tikzpicture} 
    \!\!\!\!\!\!\!
}
\title{Note on the identification of total effect in Cluster-DAGs with cycles}
\author{Clément Yvernes}
\date{March 2025}
\begin{document}
\maketitle

\begin{abstract}
   In this note, we discuss the identifiability of a total effect in cluster-DAGs, allowing for cycles within the cluster-DAG (while still assuming the associated underlying DAG to be acyclic). This is presented into two key results: first, restricting the cluster-DAG to clusters containing at most four nodes; second, adapting the notion of d-separation. We provide a graphical criterion to address the identifiability problem.
\end{abstract}

\section{Definitions and notations}
We follow the notations of \citet{pearl_causality_2009,anand_causal_2023}.

\paragraph{Notations.} A single variable is denoted by an uppercase letter $X$ and its realized value by a small letter $x$. 
A calligraphic uppercase letter $\X$ denotes a set (or a cluster) of variables.  We denote by $Pa(\X)_{\mathcal{G}}$, $\Anc(\X)_{\mathcal{G}}$, and $\Desc(\X)_{\mathcal{G}}$, the sets of parents, ancestors, and descendants for a given graph ${\mathcal{G}}$, respectively. 

\paragraph{Active vertices and paths.} A vertex $V$ is said to be \emph{active} on a path relative to $\Z$ if 1) $V$ is a collider and $V$ or any of its descendants are in $\Z$ or 2) $V$ is a non-collider and is not in $\Z$. A path $\pi$  is said to be \emph{active} given (or conditioned on) $\Z$ if every vertex on $\pi$ is active relative to $\Z$. Otherwise, $\pi$   is said to be \emph{inactive}. 

\paragraph{Separated sets.} Given a graph ${\mathcal{G}}$, $\X$ and $\Y$ are d-separated by $\Z$ if every path between $\X$ and $\Y$ is inactive given $\Z$. We denote this d-separation by $\X \ind_{\G} \Y \mid \Z$. Otherwise, $\X$ and $\Y$ are d-connected by $\Z$.

\paragraph{Mutilated graph.} The mutilated graph 
 ${\mathcal{G}}_{\overline{\X}\underline{\Z}}$ is the result of
removing from a graph ${\mathcal{G}}$ edges 
with an arrowhead into $\X$ (e.g., $A \rightarrow \X$, $A \leftrightarrow \X$),  
and edges with a tail from $\Z$ (e.g., $A \leftarrow \Z$).

\paragraph{Structural Causal Models and associated graphs.} Formally, a {Structural Causal Model} (SCM) $\mathcal{M}$ is a 4-tuple $\langle \U, \V, \mathcal{F}, P(\U)\rangle$, where $\U$ is a set of exogenous (latent) variables and $\V$ is a set of endogenous (measured) variables. $\mathcal{F}$ is a collection of functions $\{f_i\}_{i=1}^{|\V|}$ such that each endogenous variable $V_i\in\V$ is a function $f_i\in\mathcal{F}$ of $\U_i\cup Pa(V_i)$, % to $V_i$,
where $\U_i\subseteq\U$ and $Pa(V_i)\subseteq\V\setminus V_i$. The uncertainty is encoded through a probability distribution over the exogenous variables, $P(\U)$.
Each SCM $\mathcal{M}$ induces a directed acyclic graph (DAG) with bidirected edges -- or an acyclic directed mixed graph (ADMG) -- $G(\V, \*E = (\E_{D},\E_{B}))$, known as a \emph{causal diagram}, that encodes the structural relations among $\V\cup\U$, where every $V_i \in \V$ is a vertex. We potentially distinguish edges $\mathcal{E}$ if they are directed $\E_D$ or bidirected $\E_B$. There is a directed edge $(V_j \rightarrow V_i)$ for every $V_i \in \V$ and $V_j \in Pa(V_i)$, and there is a dashed bidirected edge $(V_j \dashleftrightarrow V_i)$ for every pair $V_i, V_j \in \V$ such that $\U_i \cap \U_j \neq \emptyset$ ($V_i$ and $V_j$ have a common exogenous parent). 

\paragraph{Interventions and do-operator.} Performing an intervention $\X\!\!=\!\! x$ is represented through the do-operator, \textit{do}($\X\!=\!x$), which represents the operation of fixing a set $\X$ to a constant $x$, and 
induces a submodel $\mathcal{M}_\X$, which is $\mathcal{M}$ with $f_X$ replaced to $x$ for every $X \in \X$. The post-interventional distribution induced by $\mathcal{M}_\X$ is denoted by $P(\V \setminus \X |do(\X))$.\\ %or simply $P_\X(\V)$. 

We work on the Cluster-DAG {framework} introduced in \cite{anand_causal_2023}. 
Variables are grouped into a set of clusters of variables $\{\*{C}_1, \ldots, \*{C}_k\}$ that forms a partition of $\V$ (in this setting, a cluster may be reduced to a single variable).
The underlying causal diagram over the individual variables in $\V$ may not be available, but we assume that we have the causal knowledge about the clusters. 

\begin{definition}[Cluster-DAG]
    A Cluster-DAG (or C-DAG) $\Gc =\left(\V^C, (\E^C_{D},\E^C_{B})  \right)$ is an ADMG  whose nodes represent a (non empty) cluster of variables. \end{definition}

 The cardinal of each cluster is displayed in the upper left corner of each node as represented in Figure \ref{fig:example_cluster_DAG}, and  for $A$ a cluster, we denote the variable inside $A$ as following~: $A = \left\{A_1, \dots, A_{\#A} \right\}$

 The novelty in this work is that cluster-DAGs are not supposed to be acyclic, as illustrated in Figure \ref{fig:example_cluster_DAG}, which means that the clusters are  not an  admissible partition of the nodes.
We keep the name Cluster-DAG to emphasize that the graph over the variables is acyclic. 

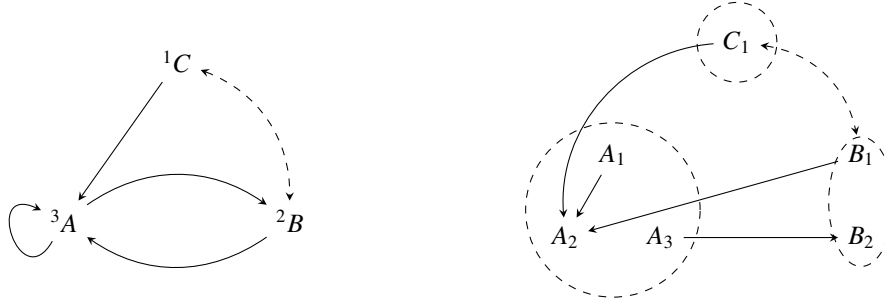
\begin{figure}[t]
    \begin{tikzpicture}[ ->, >=stealth, scale=3]
        % Nodes
        \node (A) at (0, 0) {${}^3A$};
        \node (B) at (1, 0) {${}^2B$};
        \node (C) at (0.5, 0.7071) {${}^1C$};

        % Edges
        \draw (C) -- (A);
        \path (C) edge [<->, dashed, bend left = 35] node {} (B);
        \path (A) edge[bend left=35] (B);
        \path (B) edge[bend left=35] (A);
        \path (A) edge [->, loop left, looseness=5, in=155, out=240] node {} (A);

        \end{tikzpicture} \hfill
            \begin{tikzpicture}[->, >=stealth, scale=1.1]
        % Micro-level Nodes
        \node[anchor=center] (A1) at (0.0000, 0.6667) {$A_1$};
        \node[anchor=center] (A2) at (-.5774, -.3333) {$A_2$};
        \node[anchor=center] (A3) at (0.5774, -.3333) {$A_3$};
        
        \node[anchor=center] (B1) at (3.0000, 0.6667) {$B_1$};
        \node[anchor=center] (B2) at (3.0000, -.3333) {$B_2$};
        
        \node[anchor=center] (C)  at (1.50000, 2.0284) {$C_1$};
        
        \node[draw, dashed, ellipse, scale= 7.0, xscale=1, yscale=1, anchor=center]  at (0, 0) {};
        \node[draw, dashed, ellipse, scale= 1.3, xscale=2, yscale=4, anchor=center]  at (3, 0.1) {};
        \node[draw, dashed, ellipse, scale= 3.2, xscale=1, yscale=1, anchor=center]  at (1.5000, 2.0284) {};

        \path (C) edge[bend right=45] (A2);
        \path (C) edge [<->, dashed, bend left = 35] node {} (B1);
        \draw (A3) -- (B2);
        \draw (A1) -- (A2);
        \draw (B1) -- (A2);

    \end{tikzpicture}
        \caption{Example of Cluster-DAG (left) and a micro-DAG compatible (right).}
        \label{fig:example_cluster_DAG}
\end{figure}

\begin{definition}[Micro Graph Compatible]
    \label{def:compatible_graphs}
Let $\mathcal{V}^c = \{C_1,\ldots, C_k\}$ be a partition of the variables $\mathcal{V}^m = (V_i)_i$. 
    Let $\Gc$ be a cluster-DAG on $\V^C$. An ADMG $\Gm =\left(\V^m, (\E^m_{D},\E^m_{B})  \right)$ on $\V^m$ is a micro graph compatible with $\Gc$, if it satisfies the following properties:
    \begin{itemize}
        \item $\V^m = \bigcup_{V \in \V^C} V = \bigcup_{V \in \V^C}  \left\{V_1, \dots, V_{\#V} \right\}$.
        
        \item For all distinct $A^C, B^C \in \V^C$ , $A^C \rightarrow B^C \in \E^C_{D}$ if and only if there exist $(i,j) \in \left\{1, \dots , \#A^C\right\} \times \left\{1, \dots , \#B^C\right\}$ such that $A_i \rightarrow B_j \in \E^m_{D}$.
        
        \item For all distinct $A^C, B^C \in \V^C$ , $A^C \dashleftrightarrow B^C \in \E^C_{B}$ if and only if there exist $(i,j) \in \left\{1, \dots , \#A^C\right\} \times \left\{1, \dots , \#B^C\right\}$ such that $A_i \dashleftrightarrow B_j \in \E^m_{B}$.
        
        \item For all $A^C \in \V^C$ , $\leftselfloop A^C \in \E^C_{D}$ if and only if there exist distinct $i,j \in \left\{1, \dots , \#A^C\right\}$ such that $A_i \rightarrow A_j \in \E^m_{D}$.
        
        \item For all $A^C \in \V^C$ , $\dashleftselfloop A^C \in \E^C_{B}$ if and only if there exist distinct $i,j \in \left\{1, \dots , \#A^C\right\}$ such that $A_i \dashleftrightarrow A_j \in \E^m_{D}$.

    \end{itemize}
    
    \noindent The set of all micro DAGs compatible with $\Gc$ is written as $\C\left(\Gc\right)$.
\end{definition}

\begin{notation}
\label{notation:base}
    Let $\Gc$ be a cluster-DAG, and let $V$ be a cluster in $\Gc$. We use the following notations:
    \begin{itemize}
        \item When $V$ is seen as a node of $\Gc$, $V$ will be written as $V^C$.
        \item When $V$ is seen as a set of variables of compatible micro DAG $\Gm$, $V$ will be written as $V^m = \{V_1, \dots, V_{\#V} \}$ where the indices follow the topological ordering induced by $\Gm$.
    \end{itemize}  
    We will use the same notations for any intersection or union of cluster.
\end{notation}

\section{Main problem}
We are interested in extending classical causal inference based on do-calculus (see \citep{pearl_causality_2009}) to C-DAG.
In \cite{anand_causal_2023}, they provide the foundations and machinery for valid probabilistic and causal inferences for cluster-DAGs, akin to Pearl’s d-separation and classical do-calculus for when such a coarser graphical representation of the system is provided based on the limited prior knowledge available.
Our goal is to derive the same analysis, for rung 2 of Pearl's causal ladder  when C-DAGs are allowed to be cyclic in whole generality. 

As discussed in \cite{anand_causal_2023}, a naive approach to causal inference with cluster variables—e.g., identifying $Q = P(\*C_i|do(\*C_k))$—proceeds as follows: first, enumerate all causal diagrams compatible with $\Gc$; then, assess the identifiability of $Q$ in each diagram; finally, return  $P(\*C_i|do(\*C_k))$ if all diagrams yield the same expression, and “non-identifiable” otherwise. However, in practice, this approach becomes intractable in high-dimensional settings.

\noindent{Here we shall answer in a surprising simple way to the two following fundamental questions : } 
Q1 : Can valid inferences be performed about cluster variables using C-DAGs directly, without going through exhaustive enumeration? Q2 : What properties of C-DAGs are shared by all the compatible causal diagrams? 
We formalize this in Problem \ref{pb:main_goal}.
%The ultimate goal of this work is to find out an efficient way to tell if a specific do-calculus rule applies in all compatible graphs without enumerating all of them. To do so we aim at finding an efficient way to tell if d-separation holds in all compatible graphs under some mutilation. Problem \ref{pb:main_goal} formalizes this idea.

\begin{problem}
\label{pb:main_goal}
    Let $\Gc$ be an admissible cluster-DAG.  We aim to find out an efficient way to characterise over $\Gc$ the following property: let $\X^C,\Y^C$ and $\Z^C$ be pairwise distinct subsets of nodes of $\Gc$ (subset of clusters).  Let $\mathcal{A}$ and $\mathcal{B}$ be subsets of nodes of $\Gc$.
    \begin{equation*}
        \X^C \ind_{\Gc_{\overline{\mathcal{A}^C} \underline{\mathcal{B}^C}}} \Y^C \mid \Z^C \quad \overset{\text{\tiny def}}{\equiv} \quad
        \forall \Gm \in \C\left(\Gc\right) ~~
        \X^m \ind_{\Gm_{\overline{A^m} \underline{B^m}}} \Y^m \mid \Z^m.
    \end{equation*}
\end{problem}
\section{A useful characterisation of d-separation}
{To tackle Problem~\ref{pb:main_goal}, we first prove a characterization of the d-separation  for ADMGs (not necessarily related to cluster-DAGs). It will be needed in the sequel, since in our general setting the graphs associated to our cluster-DAGs are ADMGs.}

\begin{definition}
    \label{def:structure_of_interest}
    A \emph{structure of interest} $\sigma$ is an ADMG, with a single connected component, in which each node $V$ satisfies the following property:
    \begin{itemize}
        \item $V$ has at most one outgoing arrow, or,
        \item $V$ has two outgoing arrows but no oncoming arrow.
    \end{itemize}
\end{definition}

We remark that a path is a structure of interest.
We draw in Figure \ref{fig:structure_of_interest} an example of a structure of interest. 

\begin{definition}
\label{def:root}
    Let $\G$ be an ADMG. The roots of $\G$, denoted as $\Root(\G)$, is the set of vertices that has no child.
\end{definition}

\begin{definition}
\label{def:connecting_structure_of_interest}
    Let $\G=(\V,\E)$ be a mixed graph. Let $\X, \Y, \Z$ be pairwise disjoint subsets of $\V$. We say that a structure of interest $\sigma$ connects $\X$ and $\Y$ under $\Z$ if the following conditions hold:
    \begin{itemize}
        \item $\sigma \subseteq \G$ \hfill \textit{(subgraph)}
        \item $\X \cap \sigma \neq \emptyset$ and $\Y \cap \sigma \neq \emptyset$ \hfill \textit{(connects $\X$ and $\Y$)}
        \item $\Root(\sigma) \subseteq \Z \cup \X \cup \Y$ and  \hfill \textit{(all vertices are ancestors of $\Z \cup \X \cup \Y$)}
        \item $(\sigma \setminus \Root(\sigma)) \cap \Z = \emptyset$ \hfill \textit{(neither chains nor forks are in $\Z$)}
    \end{itemize}
\end{definition}

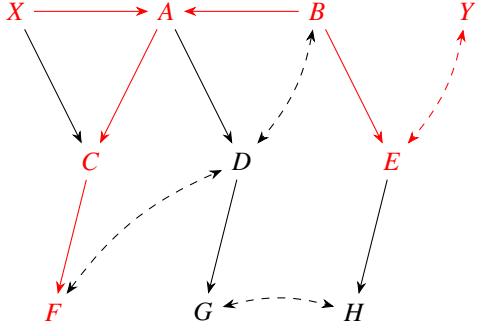
\begin{figure}[ht]
    \centering
    \begin{tikzpicture}[->, >=Stealth]

    \node[text=red] (X) at (0,0) {$X$};
    \node[text=red] (A) at (2,0) {$A$};
    \node (B)[text=red] at (4,0) {$B$};
    \node (Y)[text=red] at (6,0) {$Y$};
    \node[text=red] (C) at (1,-2) {$C$};
    \node (D) at (3,-2) {$D$};
    \node (E)[text=red] at (5,-2) {$E$};
    \node[text=red] (F) at (0.5,-4) {$F$};
    \node (G) at (2.5,-4) {$G$};
    \node (H) at (4.5,-4) {$H$};

    \draw[->, red] (X) -- (A);
    \draw[->, red] (B) -- (A);
    \draw[->, red] (A) -- (C);
    \draw[->] (X) -- (C);
    \draw[->] (A) -- (D);
    \draw[->, red] (B) -- (E);
    \draw[->, red] (C) -- (F);
    \draw[->] (D) -- (G);
    \draw[->] (E) -- (H);
    
    \draw[<->, dashed] (B) to[bend left=15] (D);
    \draw[<->, red, dashed] (Y) to[bend left=15] (E);
    \draw[<->, dashed] (F) to[bend left=15] (D);
    \draw[<->, dashed] (G) to[bend left=15] (H);
    
    \end{tikzpicture}
    \caption{An ADMG in which there is a structure of interest in \textcolor{red}{red} that connects $X$ and $Y$ under $\{F, E\}$.}
    \label{fig:structure_of_interest}
\end{figure}

\begin{theorem}[d-connection with structures of interests]
\label{th:new_d_sep}
    Let $\G$ be an ADMG. Let $\X, \Y, \Z$ be pairwise disjoint subsets of nodes of $\G$. The following properties are equivalent:
    \begin{enumerate}
        \item $\X \notind_{\G} \Y \mid \Z$. \label{th:new_d_sep:1}
        \item $\G$ contains a structure of interest $\sigma$ that connects $\X$ and $\Y$ under $\Z$. \label{th:new_d_sep:2}
    \end{enumerate}
\end{theorem}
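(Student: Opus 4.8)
The plan is to unfold Statement~\ref{th:new_d_sep:1} using the paper's own definitions: $\X \notind_\G \Y \mid \Z$ means precisely that there is a path between some $x \in \X$ and some $y \in \Y$ that is active given $\Z$. So the genuine content of the theorem is the equivalence between the existence of an \emph{active path} and the existence of a \emph{structure of interest} connecting $\X$ and $\Y$ under $\Z$. I would prove the two implications separately, keeping in mind the guiding picture that a structure of interest is morally ``an active path together with, for each of its colliders, a directed path descending into $\Z$ that witnesses the collider's activation.''

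For \ref{th:new_d_sep:1} $\Rightarrow$ \ref{th:new_d_sep:2}, I would start from an active path $\pi$ from $x$ to $y$. For every collider $c$ on $\pi$, activeness supplies a directed path $\rho_c$ from $c$ to an element of $\Z$, which I take shortest so that its internal vertices avoid $\Z$; then I set $\sigma = \pi \cup \bigcup_c \rho_c$ and verify the four clauses of Definition~\ref{def:connecting_structure_of_interest}. The root clause is routine: the only sinks of $\sigma$ are the endpoints $x,y$, any collider already lying in $\Z$, and the $\Z$-endpoints of the $\rho_c$, all of which belong to $\X\cup\Y\cup\Z$. The ``no chains nor forks in $\Z$'' clause is also routine: every non-root has an outgoing edge, non-colliders of $\pi$ are active hence outside $\Z$, and internal $\rho_c$-vertices avoid $\Z$ by shortestness. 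The delicate clause is the structure-of-interest property of Definition~\ref{def:structure_of_interest}: a vertex can end up with two outgoing edges together with an incoming one only through an \emph{intersection} — typically when some activator $\rho_c$ runs into a fork of $\pi$, or when two activators overlap — and such a vertex violates the definition.

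This intersection phenomenon is the main obstacle, and it genuinely occurs: one can build an active path whose forced activator must pass through a fork, so that the naive union $\sigma$ fails to be a structure of interest, even though a different (shorter) active path yields a valid one. I would resolve this by a minimality argument: among all pairs (active path, choice of activators), pick one minimizing the number of edges of $\sigma$. If a bad intersection occurred at a vertex $v$, then concatenating the activator segment entering $v$ with one of $v$'s outgoing branches would produce a strictly shorter active path, hence a smaller admissible $\sigma$, contradicting minimality. Proving that \emph{every} bad configuration admits such an edge-reducing shortcut, while preserving both activeness and the endpoints in $\X$ and $\Y$, is the technical heart of this direction.

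For \ref{th:new_d_sep:2} $\Rightarrow$ \ref{th:new_d_sep:1}, I would read an active path out of $\sigma$. Two observations make this tractable: by the last clause of Definition~\ref{def:connecting_structure_of_interest} only roots of $\sigma$ may lie in $\Z$, and a fork (having two children) is never a root, hence never in $\Z$. Therefore, on any simple path inside the connected graph $\sigma$, every non-collider automatically avoids $\Z$ (it has an outgoing edge, so it is a non-root), and the only possible obstruction is an \emph{inactive collider} $c$. Such a $c$ has incoming edges, so it is not a fork and has a unique outgoing chain; following it reaches a root $r$, and since $c$ has no descendant in $\Z$ this chain avoids $\Z$ and $r\in\X\cup\Y$. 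I then reroute, splicing the descending chain onto the $x$-side when $r\in\Y$ and onto the $y$-side when $r\in\X$; in both cases $c$ becomes a non-collider chain while the endpoints stay in $\X$ and $\Y$. Choosing the path with the fewest colliders (then shortest) and re-simplifying after each reroute makes the collider count strictly decrease, so the minimal such path must already be active. The one point demanding care here is the bookkeeping that keeps the path simple after splicing, ensuring that re-simplification does not introduce new colliders.
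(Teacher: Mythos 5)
Your direction \ref{th:new_d_sep:2} $\Rightarrow$ \ref{th:new_d_sep:1} is correct and is essentially the paper's argument: an inactive collider $c$ must have, inside $\sigma$, a directed chain avoiding $\Z$ down to a root in $\X \cup \Y$, and one reroutes through it. The one point you flag as delicate (keeping the path simple, so that re-simplification does not create new colliders) is handled in the paper without any minimality or re-simplification: splice at the \emph{last} vertex $T$ at which the descending chain meets the subpath being kept. The spliced object is then automatically a simple path, its colliders are a subset of the old colliders on the kept segment, and the number of colliders that are not ancestors of $\Z$ strictly decreases. Adopting that splice closes this direction completely.

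The genuine gap is in direction \ref{th:new_d_sep:1} $\Rightarrow$ \ref{th:new_d_sep:2}. Your setup ($\sigma = \pi \cup \bigcup_c \rho_c$ with shortest activators) is the paper's, and your diagnosis that intersections of activators with $\pi$ are the real obstacle is correct; but the claim you yourself call ``the technical heart'' --- that every bad configuration admits an edge-reducing shortcut preserving activeness and the endpoints --- is asserted, not proved, and it hides two real difficulties. First, the shortcut (the activator segment $\rho_c$ entering $v$ followed by an outgoing branch of $v$ along $\pi$) is a \emph{walk}, not a path: $\rho_c$ may meet the retained $\pi$-segments elsewhere, so you need the walk-to-path conversion (Proposition~\ref{prop:active_walk}) and then control over which vertices become colliders of the simplified path. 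Second, ``strictly shorter active path, hence a smaller admissible $\sigma$'' is a non sequitur: the new path's colliders need activators of their own, and unless you show these can be chosen inside the old $\sigma$ (e.g.\ by reusing the old activators and the tail of $\rho_c$ below $v$), the new union need not have fewer edges than the old one, and the contradiction with minimality evaporates. Both points are fixable, but they are exactly where the work of this direction lies, and your write-up stops short of them.

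It is worth adding that your instinct here is sharper than the paper's own treatment. The paper repairs a violating vertex $V$ by deleting all edges at $V$ not used by $\pi$ and passing to the connected component of $\pi$, claiming $\Root(\F') \subseteq \Root(\F)$. That claim is false in precisely your ``forced activator through a fork'' scenario: if the deleted edge is the first edge $c \to V$ of the activator of a collider $c$ of $\pi$, then $c$ becomes a new root of $\F'$ lying outside $\Z \cup \X \cup \Y$, so the output satisfies Definition~\ref{def:structure_of_interest} but violates the root condition of Definition~\ref{def:connecting_structure_of_interest}. In such cases no pruning of the original $\pi$ can work and the path itself must be replaced --- which is what your global minimality scheme does. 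So your approach is the right one for this direction, but precisely for that reason it cannot be left as a declared ``technical heart''; it has to be carried out.
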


\begin{proof}
Let us prove the two implications:
\begin{itemize}
    \item $\ref{th:new_d_sep:1} \Rightarrow \ref{th:new_d_sep:2}$: If \(\X \notind_{\G} \Y \mid \Z\), then, by definition, the ADMG \(\G\) contains a path \(\pi\) connecting \(\X\) and \(\Y\). For each collider \(C\) on \(\pi\), there exists a directed path \(\pi_C\) from \(C\) to a vertex in \(\Z\). Without loss of generality, we may assume that each such path \(\pi_C\) meets \(\Z\) only at its terminal vertex. Define the subgraph
    \[
    \F \coloneqq \pi \cup \bigcup_{C\, \text{collider on } \pi} \pi_C.
    \]
    By construction, \(\F\) has a single connected component and satisfies the following properties:
    \begin{itemize}
        \item \(\F \subseteq \G\).
        \item \(\X \cap \F \neq \emptyset\) and \(\Y \cap \F \neq \emptyset\).
        \item \(\Root(\F) \subseteq \Z \cup \X \cup \Y\).
        \item \((\F \setminus \Root(\F)) \cap \Z = \emptyset\).
    \end{itemize}
    
    However, some vertices in \(\F\) may violate the conditions of Definition~\ref{def:structure_of_interest}, preventing \(\F\) from being a structure of interest. Let \(V\) be such a vertex. Since \(\pi\) uses at most two arrows around \(V\), we remove from \(\F\) any edges adjacent to \(V\) that are not used by \(\pi\). We then consider \(\F' \subseteq \F\), the connected component of \(\F\) containing both \(X\) and \(Y\).\footnote{This procedure preserves the path \(\pi\), so \(X\) and \(Y\) remain connected.} If \(V\) remains in \(\F'\), then it now satisfies the structural conditions of Definition~\ref{def:structure_of_interest}. Moreover, \(\F'\) inherits the following properties:
    \begin{itemize}
        \item \(\F' \subseteq \F \subseteq \G\),
        \item \(\F' \cap \X \neq \emptyset\) and \(\F' \cap \Y \neq \emptyset\),
        \item \(\Root(\F') \subseteq \Root(\F) \subseteq \Z \cup \X \cup \Y\),
        \item \((\F' \setminus \Root(\F')) \subseteq (\F \setminus \Root(\F))\), hence \((\F' \setminus \Root(\F')) \cap \Z = \emptyset\).
    \end{itemize}
    
    We repeat this procedure for all remaining vertices that violate the definition. Since each step strictly reduces the number of such violations, the process terminates in finitely many steps. We thus obtain a subgraph \(\sigma \subseteq \G\) that satisfies all conditions of Definition~\ref{def:structure_of_interest} and connects \(\X\) and \(\Y\) under \(\Z\).

    \item $\ref{th:new_d_sep:2} \Rightarrow \ref{th:new_d_sep:1}$: By definition, $\sigma$ has a single connected component. Thus $\X$ and $\Y$ are connected by $\sigma$. Let $\pi$ be a path from $\X$ and $\Y$ in $\sigma$. Let us first prove that without loss of generality, we can assume that all colliders on $\pi$ are ancestors of $\Z$.
    If it's not the case, since $\Root(\sigma) \subseteq \Z \cup \X \cup \Y$, all colliders that are not ancestors of $\Z$ are ancestors of $\X \cup \Y$. Without loss of generality, assume that a collider \(C\) on \(\pi\) is an ancestor of \(\X\). Thus, $\sigma$ contains a directed path $\pi^1$ from $C$ to $\X$. Let $\pi^2$ be the subpath of $\pi$ between $C$ and $\Y$. Since \(C\) belongs to both \(\pi^1\) and \(\pi^2\), these two paths intersect at \(C\), and possibly at other vertices. Let $T$ be the last vertex of $\pi^1$ that is in $\pi^1 \cap \pi^2$. Let us consider $\pi_1'$, the subpath of $\pi_1$ from $T$ to $\X$ and $\pi_2'$ the subpath of $\pi^2$ from $T$ to $\Y$. Let $\pi' \coloneqq \pi_1' \cup \pi_2'$. $\pi'$ is a path. Moreover, between $X$ and $T$,  $\pi'$ is a directed path, and, after $T$ to $\Y$, $\pi'$ is a subpath of $\pi_2$. Therefore, $\pi'$ contains at least one fewer collider than $\pi$ that is not an ancestor of $\Z$. Repeating this procedure iteratively allows us to construct a path in \(\sigma\) from \(\X\) to \(\Y\) in which all colliders are ancestors of \(\Z\).

    Finally, note that all forks and chains on \(\pi\) are not roots by definition, and hence do not belong to \(\Z\). Therefore, the resulting path \(\pi\) is \(d\)-connecting between \(\X\) and \(\Y\) given \(\Z\).
\end{itemize}

\end{proof}

\newpage
\section{Basic Properties}
In this section, we outline some fundamental properties of C-DAGs that provide deeper insight into their structure and behavior.
%\textcolor{red}{Proof are done rapidly.}

\begin{proposition}
    Let $\Gc$ be a cluster-DAG. $\C\left(\Gc\right) \neq \emptyset$ if and only if $\Gc$ does not contain any cycle on cluster of size $1$. When this is true, we say that $\Gc$ is admissible.
\end{proposition}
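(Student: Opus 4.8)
The plan is to prove the two implications of the equivalence separately, the forward one being a short contradiction and the converse an explicit construction that carries the content. For the forward implication ($\C(\Gc)\neq\emptyset \Rightarrow$ no singleton cycle) I would argue the contrapositive: assume $\Gc$ carries a cycle supported entirely on clusters of size $1$ and show $\C(\Gc)=\emptyset$. If this cycle is a directed self-loop $\leftselfloop C \in \E^C_{D}$ on a singleton $C$, then by Definition~\ref{def:compatible_graphs} any compatible $\Gm$ would need distinct $i,j\in\{1,\dots,\#C\}$ with $C_i\to C_j$; since $\#C=1$ no such pair exists, so no micro graph can reproduce the loop. If instead it is a genuine cycle $C^{(1)}\to\cdots\to C^{(m)}\to C^{(1)}$ in $\Gc$ with every $\#C^{(t)}=1$, then in any $\Gm\in\C(\Gc)$ each cluster edge $C^{(t)}\to C^{(t+1)}$ must be realized between the unique vertices, i.e. $C^{(t)}_1\to C^{(t+1)}_1$; chaining these edges produces a directed cycle in $\Gm$, contradicting that $\Gm$ is an ADMG (a bidirected self-loop on a singleton is excluded in the same way). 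Hence $\C(\Gc)=\emptyset$.

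For the converse I would assume no cycle lies on singletons and build one $\Gm\in\C(\Gc)$ by fixing a total order $\prec$ on $\V^m$ and realizing every cluster edge by a $\prec$-increasing micro edge, so that the directed part of $\Gm$, being contained in a total order, is automatically acyclic. The hypothesis guarantees that the subgraph of $\Gc$ induced on the singleton clusters is acyclic, hence admits a topological order; I place the unique vertices of the singletons consecutively in the middle of $\prec$ following this order. For each cluster $C$ with $\#C\ge2$ I reserve two distinct vertices, an out-port $C_{\mathrm{out}}$ placed before every singleton vertex and an in-port $C_{\mathrm{in}}$ placed after every singleton vertex, leaving the remaining vertices of $C$ arbitrary and isolated.

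It then remains to route the edges and check compatibility. I would realize a directed cluster edge $A\to B$ by the single micro edge from $A$'s out-port to $B$'s in-port (using the unique vertex when the cluster is a singleton), a directed self-loop $\leftselfloop C$—necessarily with $\#C\ge2$ under the hypothesis—by $C_{\mathrm{out}}\to C_{\mathrm{in}}$, and every bidirected edge or bidirected self-loop by a corresponding bidirected micro edge, which is irrelevant to acyclicity. A four-case check (large/large, large/singleton, singleton/large, singleton/singleton) shows each such micro edge is $\prec$-increasing, the last case being exactly where the topological order of the singletons is used. Finally I would verify the five equivalences of Definition~\ref{def:compatible_graphs}: at least one micro edge was added for each cluster edge and each self-loop, and micro edges were added only between ports of clusters joined by a cluster edge (and internally only inside self-looped clusters), so the converse directions of the equivalences hold as well; therefore $\Gm\in\C(\Gc)$ and $\C(\Gc)\neq\emptyset$.

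I expect the only real obstacle to lie in the converse: disentangling the rigid singleton-to-singleton edges, which admit no freedom and impose the genuine acyclicity constraint, from the flexible large-cluster edges, which can always be broken by their distinct in- and out-ports. The hypothesis enters in precisely two places—ensuring the singleton subgraph is acyclic (so the topological order exists) and ensuring every self-loop sits on a cluster of size $\ge2$ (so it can be realized by an internal edge)—and the remaining care is the bookkeeping that the port construction never forces a forbidden micro edge.
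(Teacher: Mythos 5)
Your proof is correct and follows essentially the same route as the paper's: the forward direction is the same contradiction argument, and your converse construction (singleton edges forced onto the unique vertex, each larger cluster split into an out-port and an in-port) is exactly the paper's choice of placing all outgoing edges at $V_1$ and all incoming edges at $V_2$. The only differences are cosmetic — you verify acyclicity by exhibiting an explicit linear order where the paper argues that no directed path can traverse a cluster of size at least two, and you explicitly flag the bidirected self-loop on a singleton as a case that must be excluded, a boundary point the paper's statement and proof silently gloss over.
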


\begin{proof}
    Let us prove the two implications:
    \begin{itemize}
        \item $\Rightarrow$: If $\Gc$ contains the cycle ${}^1A^1 \rightarrow \cdots \rightarrow {}^1A^1$, then any compatible graph would contain the cycle $A_1 \rightarrow \cdots \rightarrow A_1$, which is not allowed by Definition \ref{def:compatible_graphs}.
        \item $\Leftarrow$: If $\Gc$ does not contain any cycle on cluster of size $1$. Let us construct a compatible graph. For all cluster $V$ of size $1$, we put all incoming and outgoing edges at $V_1$. This does not create a cycle because, otherwise, $\Gc$ would contain a cycle on cluster of size $1$. For all other clusters $V$, as they are at least of size $2$, we can deal with $V_1$ and $V_2$. We put all outgoing edges at $V_1$ and all incoming edges at $V_2$. This does not create a cycle because there is no chain in $V^m$.
    \end{itemize}
\end{proof}

\begin{proposition}
\label{prop:remove_arrow}
    Let $\Gc$ be an admissible cluster-DAG, $\Gm$ be a compatible graph with $\Gc$ and $V^C$ and $W^C$ be nodes of $\Gc$, ie clusters. If $\Gm$ contains two similar (same type) arrows between $V^m$ and $W^m$, then removing one of these arrows create another compatible graph.
\end{proposition}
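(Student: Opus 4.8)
The plan is to let $\Gm'$ denote the graph obtained from $\Gm$ by deleting one of the two same-type arrows while keeping the other, and then verify directly that $\Gm'$ satisfies every requirement of Definition~\ref{def:compatible_graphs}, so that $\Gm' \in \C(\Gc)$. First I would observe that since $\Gm'$ is obtained from the ADMG $\Gm$ by removing a single edge, $\Gm'$ is again an ADMG: its vertex set $\V^m$ is unchanged, and any directed cycle in $\Gm'$ would already be a directed cycle in $\Gm$, contradicting its acyclicity. So this point is immediate and is not where the work lies.

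Next I would check the five \enquote{iff} bullets of Definition~\ref{def:compatible_graphs}. The key remark is that deleting an edge can only destroy witnesses, never create new ones, so the \emph{micro-edge implies cluster-edge} direction (the \enquote{only if} of each bullet) is free: every edge of $\Gm'$ is already an edge of $\Gm$, and $\Gm$ is compatible with $\Gc$ by hypothesis. The entire substance therefore reduces to the converse direction, namely that every cluster-level edge of $\Gc$ still possesses a witnessing micro-edge in $\Gm'$.

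Here I would split into the two cases $V^C \neq W^C$ and $V^C = W^C$, using the crucial fact that the removed arrow, being of a fixed type (directed or bidirected) and lying between $V^m$ and $W^m$, can have served as a witness for exactly one cluster-level relation: the edge $V^C \to W^C$ (resp. $V^C \leftrightarrow W^C$) when $V^C \neq W^C$, or the directed (resp. bidirected) self-loop on $V^C$ when $V^C = W^C$. For every other cluster-level relation, whatever witness existed in $\Gm$ is left untouched in $\Gm'$; and for that single relation, the arrow we retained is itself a witness of the same type between the same clusters. In the self-loop case, one notes additionally that the retained arrow still connects two \emph{distinct} indices, so it continues to witness the self-loop. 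Hence all five conditions persist.

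The main obstacle is really a bookkeeping point rather than a genuine difficulty: one must confirm that the case analysis is exhaustive, i.e.\ that a deleted directed cross-cluster arrow cannot inadvertently have been the sole witness for a bidirected relation or for a self-loop. This follows because each bullet of Definition~\ref{def:compatible_graphs} keys the presence of a given cluster edge to micro-edges of a \emph{matching} type and \emph{matching} endpoint-clusters, so the removed arrow witnesses at most the one relation identified above. Once this uniqueness is spelled out, the survival of the second arrow closes the argument and shows $\Gm' \in \C(\Gc)$.
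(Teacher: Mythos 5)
Your proof is correct and takes essentially the same approach as the paper: the paper's entire proof is the one-line observation that, by Definition~\ref{def:compatible_graphs}, only one witnessing arrow between $V^m$ and $W^m$ is necessary, and your argument is simply a careful, fully spelled-out expansion of that observation (acyclicity under edge deletion, preservation of both directions of each \enquote{iff}, and the uniqueness of the cluster relation witnessed by the removed arrow). Nothing in your bookkeeping diverges from the paper's intent; it just makes explicit what the paper leaves implicit.
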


\begin{proof}
    By definition, only one arrow between $V^m$ and $W^m$ is necessary.
\end{proof}

\begin{proposition}[Bidirected Arrows are not a big deal]
\label{prop:bidirected_easy}
    Let $\Gc$ be an admissible cluster-DAG and $A$ and $B$ two clusters. The following properties hold:
    \begin{itemize}
        \item For any $\Gm \in \C\left(\Gc\right)$, the micro graph obtained from $\Gm$ by removing all bidirected edges belongs to $\C(\Gc')$, where $\Gc'$ denotes the cluster-DAG obtained from $\Gc$ by removing all bidirected edges.
        \item For any $\Gm \in \C\left(\Gc\right)$, adding the bidirected edge $A_i \dashleftrightarrow B_j$, where $A_i \neq B_j$, results in a micro graph that belongs to $\C(\Gc')$, where $\Gc'$ is the cluster-DAG obtained from $\Gc$ by adding the bidirected edge $A \dashleftrightarrow B$.
    \end{itemize}
\end{proposition}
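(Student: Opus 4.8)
The plan is to exploit the fact that the five compatibility conditions of Definition~\ref{def:compatible_graphs} decouple by edge type: the vertex-set condition together with the two conditions governing directed edges (between-cluster arrows and directed self-loops) involve only $\E_D$, whereas the two remaining conditions involve only $\E_B$. Since both operations under consideration---deleting every bidirected edge, and inserting a single bidirected edge $A_i \dashleftrightarrow B_j$---leave the directed parts of $\Gm$ and $\Gc$ untouched, I would first record that the vertex-set condition and the two directed-edge conditions transfer verbatim from the hypothesis $\Gm \in \C(\Gc)$ to the modified pair. The same observation shows that the directed part stays acyclic, so the modified micro graph is still a legitimate ADMG. The whole task therefore reduces to checking the two bidirected conditions in each case.

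For the first item, let $\Gm^-$ denote $\Gm$ stripped of all bidirected edges and $\Gc'$ denote $\Gc$ stripped of all bidirected edges. Both bidirected conditions are then vacuous: neither graph carries any bidirected edge, so in each equivalence both sides are false. Combined with the directed and vertex-set conditions carried over above, this yields $\Gm^- \in \C(\Gc')$.

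For the second item, let $\Gm^+$ denote $\Gm$ augmented with $A_i \dashleftrightarrow B_j$ (where $A_i \neq B_j$) and let $\Gc'$ denote $\Gc$ augmented with $A \dashleftrightarrow B$. I would verify the bidirected conditions cluster by cluster. For every cluster and every cluster pair other than the one incident to the new edge, the bidirected edges of $\Gm^+$ and $\Gc'$ coincide with those of $\Gm$ and $\Gc$, so the relevant condition is inherited directly. It then remains to treat the condition attached to the new edge, via a case split: if $A \neq B$ the new edge is a between-cluster bidirected edge and we check the between-cluster bidirected condition for the pair $(A,B)$; if $A = B$ it is a bidirected self-loop, and we check the bidirected self-loop condition for $A$, where $A_i \neq B_j$ guarantees that the two endpoints are distinct variables of $A$. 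In either case $A \dashleftrightarrow B$ lies in the bidirected edges of $\Gc'$ by construction, and $A_i \dashleftrightarrow B_j$ is exactly the witnessing micro edge, so both sides of the pertinent equivalence hold, giving $\Gm^+ \in \C(\Gc')$.

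There is no deep obstacle here: once the decoupling recorded in the first paragraph is in place, the remainder is essentially bookkeeping. The one point that demands care is the case split in the second item between a genuine between-cluster edge and a self-loop, together with the verification that inserting $A_i \dashleftrightarrow B_j$ perturbs only the single compatibility condition tied to clusters $A$ and $B$ and leaves every other condition intact.
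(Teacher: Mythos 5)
Your proof is correct and is precisely the detailed version of what the paper does: the paper's proof is the one-line observation that the claim ``follows directly from Definition~\ref{def:compatible_graphs},'' and your decoupling of the directed-edge and bidirected-edge conditions (plus the $A=B$ versus $A\neq B$ case split) is exactly the bookkeeping that one-liner leaves implicit. No gap; nothing further is needed.
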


\begin{proof}
    It follows directly from Definition \ref{def:compatible_graphs}.
\end{proof}

\begin{proposition}[Self-loops are not a big deal]
    Let $\Gc$ be an admissible cluster-DAG. The following properties hold:
    \begin{itemize}
        \item For any $\Gm \in \C\left(\Gc\right)$, the micro graph obtained from $\Gm$ by removing all intra-cluster edges belongs to $\C(\Gc')$, where $\Gc'$ denotes the cluster-DAG obtained from $\Gc$ by removing all self loops.
        
        \item For any $\Gm \in \C\left(\Gc\right)$ and any cluster ${}^{\geq 2}V$, there exists distinct $V_i, V_j \in V$ such that  adding the arrow $V_i \rightarrow V_j$ results in a micro graph that belongs to $\C(\Gc')$, where $\Gc'$ is the cluster-DAG obtained from $\Gc$ by adding the self loop $\leftselfloop V$.
    \end{itemize}
\end{proposition}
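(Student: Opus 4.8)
The plan is to handle the two bullet points separately, in the spirit of the proof of Proposition~\ref{prop:bidirected_easy}, since each reduces to checking the five conditions of Definition~\ref{def:compatible_graphs}. The conceptual observation driving both directions is that these conditions \emph{decouple}: the self-loop conditions (the fourth and fifth items) are governed exclusively by intra-cluster edges, while the inter-cluster conditions (the second and third items) are governed exclusively by edges between distinct clusters. Thus editing intra-cluster edges can only affect self-loops, never inter-cluster arrows. For the first bullet, let $\Gm \in \C(\Gc)$ and let $\Gm_0$ be the result of deleting every intra-cluster edge. First, $\Gm_0$ is still an ADMG: deleting edges cannot create a directed cycle, and the vertex set is unchanged. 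Then I verify compatibility with $\Gc'$ (which is $\Gc$ with all self-loops removed): the vertex-set condition is immediate; the inter-cluster directed and bidirected conditions are untouched because every edge between distinct clusters survives and $\Gc'$ carries exactly the same inter-cluster edges as $\Gc$; and since $\Gm_0$ has no intra-cluster edge whatsoever, no cluster exhibits a directed or bidirected self-loop, matching $\Gc'$. Hence $\Gm_0 \in \C(\Gc')$.

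For the second bullet I would exploit the topological indexing of Notation~\ref{notation:base}. Since $\#V \geq 2$, the cluster $V$ contains $V_1$ and $V_2$, indexed so that $V_1$ precedes $V_2$ in a topological ordering of $\Gm$. I choose $V_i := V_1$ and $V_j := V_2$ and add the directed edge $V_1 \rightarrow V_2$, yielding a micro graph $\Gm_1$. Because this edge runs from an earlier to a later vertex of an existing topological order, $\Gm_1$ is still acyclic, hence an ADMG. Compatibility with $\Gc'$ (which is $\Gc$ with the self-loop $\leftselfloop V$ added) then follows: the vertex set, all inter-cluster conditions, and all bidirected self-loop conditions are untouched since $V_1 \rightarrow V_2$ is a directed intra-cluster edge; every cluster other than $V$ keeps its intra-cluster edges unchanged; and $V$ now carries the intra-cluster edge $V_1 \rightarrow V_2$ with $V_1 \neq V_2$, which is precisely the witness required for the self-loop $\leftselfloop V$ demanded by $\Gc'$. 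The argument is uniform whether or not $\Gc$ already carried the self-loop on $V$, since in the latter case $\Gc' = \Gc$ and adding the forward edge $V_1 \rightarrow V_2$ is either a no-op or again a cycle-free addition.

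I expect the only genuine obstacle to be the acyclicity check in the second bullet. This is exactly why the statement asserts only the \emph{existence} of a suitable pair $V_i, V_j$ rather than allowing an arbitrary one: an arbitrarily chosen intra-cluster edge could close a directed cycle, whereas the topologically ordered choice $V_1 \rightarrow V_2$ provably cannot. Every other verification is routine bookkeeping against the intra- versus inter-cluster dichotomy of Definition~\ref{def:compatible_graphs}, exactly as in Proposition~\ref{prop:bidirected_easy}.
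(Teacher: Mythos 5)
Your proposal is correct and takes essentially the same approach as the paper: the first bullet is routine verification against Definition~\ref{def:compatible_graphs}, and the second rests on the fact that acyclicity of $\Gm$ guarantees a safe orientation for the new intra-cluster edge. Your only refinement is to use the topological indexing of Notation~\ref{notation:base} to fix the choice explicitly as $V_1 \rightarrow V_2$, whereas the paper argues non-constructively that $V_1 \in \Anc(V_2, \Gm)$ and $V_2 \in \Anc(V_1, \Gm)$ cannot both hold, so one of the two orientations must work.
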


\begin{proof}
    The first property holds directly from Definition \ref{def:compatible_graphs}. For the second one, we know that $V_1 \in \Anc(V_2, \Gm)$ and $V_2 \in \Anc(V_1, \Gm)$ cannot hold simultaneously because otherwise, $\Gm$ would contain a cycle. Thus there is always a correct choice between $V_1 \rightarrow V_2$ and $V_2 \rightarrow V_1$.
\end{proof}

\section{d-separation in C-DAGs}

%We do not solve Problem \ref{pb:main_goal} directly. 
We start by solving d-separation without mutilation. %Problem \ref{pb:d_sep} formalizes this idea.
%
%\begin{problem}
%\label{pb:d_sep}
    Let $\Gc$ be an admissible cluster-DAG. Let $\X^C,\Y^C$ and $\Z^C$ be pairwise distinct subsets of nodes of $\Gc$. We aim to find out an efficient way to characterise the following property:
    \begin{equation*}
        \X^C \ind_{\Gc} \Y^C \mid \Z^C \quad \overset{\text{\tiny def}}{\equiv} \quad
        \forall \Gm \in \C\left(\Gc\right) ~~
        \X^m \ind_{\Gm} \Y^m \mid \Z^m 
    \end{equation*}
%\end{problem}

%To solve Problem \ref{pb:d_sep}, we work on its negation. Lemma \ref{lemma:reformulation} formalizes this idea.
Thus, by Theorem \ref{th:new_d_sep}, its negation is: 
%\begin{lemma}
%\label{lemma:reformulation}
%    Let $\Gc$ be an admissible cluster-DAG. Let $\X^C,\Y^C$ and $\Z^C$ be pairwise distinct subsets of nodes of $\Gc$. We have:
    \begin{align}
           & \mathcal{X}^C \notind_{\Gc} \mathcal{Y}^C \mid \mathcal{Z}^C \quad \overset{\text{\tiny def}}{\equiv} \quad 
                \exists~  \Gm \in \C\left(\Gc\right) ~~
                    \X^m \notind_{\Gm} \Y^m \mid \Z^m.\nonumber\\ 
 %   \end{equation*}
  %  
   % \noindent is equivalent to:
    %\begin{equation}
      \Leftrightarrow & \exists ~ \Gm \in \C\left(\Gc\right),
            \exists~ \sigma^m \text{ a structure of interest that connects $\X^m$ and $\Y^m$ under $\Z^m$ }.
    \label{eq:to_be_characterised}
    \end{align}
%\end{lemma}
%\begin{proof}
%    It directly follows from the definitions.
%\end{proof}

\subsection{Reducing the size of the cluster-DAG.}

Clusters in Cluster-DAG can be arbitrary large. This could create an issue if we want to work directly in compatible graphs. To solve this issue, Theorem \ref{th:infinity_leq_four} shows that we only need to consider clusters with a maximum size of four. To show Theorem \ref{th:infinity_leq_four}, we need  Propositions \ref{prop:move_arrow_up} and \ref{prop:move_arrow_down}, which show how to construct a compatible graph from another compatible graph. In this section, following the notations of \cite{perkovic_complete_2018}, an arrow ($\leftcomingarrow$) represents either a directed arrow ($\leftarrow$) or a dashed-bidirected arrows ($\dashleftrightarrow$).

\begin{proposition}
\label{prop:move_arrow_up}
    Let $\Gc$ be an admissible cluster-DAG, $\Gm$ be a compatible graph with $\Gc$ and $V^C$ be a node in $\Gc$, ie a cluster. If there exists $i>j$ such that $\Gm$ contains the arrow $V_i \rightarrow W_w$, then there exists a compatible graph $\Gm'$ that contains the arrow $V_j \rightarrow W_w$ and not $V_i \rightarrow W_w$.
\end{proposition}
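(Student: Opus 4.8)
The plan is to construct $\Gm'$ explicitly by rerouting the tail of the offending edge from $V_i$ to $V_j$, and then to verify the only two things that could go wrong: that $\Gm'$ stays acyclic, and that it stays compatible with $\Gc$. Concretely, I would set
\[
\Gm' := \bigl(\Gm \setminus \{V_i \rightarrow W_w\}\bigr) \cup \{V_j \rightarrow W_w\},
\]
so that $\Gm'$ has the same vertex set as $\Gm$ and differs only in this single directed edge. By construction $\Gm'$ contains $V_j \rightarrow W_w$ and not $V_i \rightarrow W_w$ (if $V_j \rightarrow W_w$ happened to be present already, the construction just deletes $V_i \rightarrow W_w$). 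The key fact I would record at the outset is that, by Notation~\ref{notation:base}, the indices inside a cluster follow a topological ordering of $\Gm$; hence $i > j$ guarantees that $\Gm$ contains no directed path from $V_i$ to $V_j$. This observation is the engine of the whole argument.

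The main obstacle is acyclicity, and I would address it next. Deleting edges can never create a directed cycle, so any directed cycle of $\Gm'$ must traverse the newly added edge $V_j \rightarrow W_w$. Excising that edge from such a cycle leaves a directed path from $W_w$ to $V_j$ lying in $\Gm' \setminus \{V_j \rightarrow W_w\} \subseteq \Gm$. Concatenating the edge $V_i \rightarrow W_w$ (which belongs to $\Gm$) with this path produces a directed path from $V_i$ to $V_j$ in $\Gm$, contradicting the topological-ordering observation above. This also disposes of the degenerate case $W_w = V_j$, which would force the edge $V_i \rightarrow V_j$ and hence $i < j$. Therefore $\Gm'$ is an ADMG.

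Finally, I would verify compatibility with $\Gc$ by checking $\Gm'$ against the five clauses of Definition~\ref{def:compatible_graphs}. Since only one directed edge is altered, with its tail remaining in $V$ and its head remaining in $W$, the only clause that can be affected is the one governing the pair $(V,W)$: the inter-cluster edge $V^C \rightarrow W^C$ when $V \neq W$, or the self-loop $\leftselfloop V$ when $V = W$. In either case the relevant edge is still realized in $\Gm'$, now witnessed by $V_j \rightarrow W_w$; in the self-loop case the two endpoints are genuinely distinct since $j < i < w$. Moreover this edge was necessarily already present in $\Gc$ precisely because $\Gm$ was compatible, so no new cluster-level edge is fabricated, and no bidirected edge or any other self-loop is touched. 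Hence $\Gm' \in \C(\Gc)$, which completes the argument. The compatibility bookkeeping is entirely routine; essentially all the content lies in the acyclicity step, which the topological-ordering of the cluster indices makes immediate.
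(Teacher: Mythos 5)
Your proposal is correct and follows essentially the same route as the paper: the entire argument hinges on the fact that the index convention of Notation~\ref{notation:base} makes $V_j$ precede $V_i$ in the topological order of $\Gm$, so adding $V_j \rightarrow W_w$ cannot close a directed cycle. The paper's proof is just a terser version of this (leaving the edge removal, the degenerate case $W_w = V_j$, and the compatibility bookkeeping implicit), so your extra verification is welcome detail rather than a different method.
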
 

\begin{proof}
    %Let $V_i \rightarrow W_w$ be the arrow. 
    If $i > j$, thus by the convention of Notation \ref{notation:base}, we know that $V_j$ is before $V_i$ in the topological order of $\Gm$. Thus, $V_j$ is not a descendant of $\V_i$ in $\Gm$. Thus, $V_j$ is not a descendant of $W_w$ in $\Gm$. Therefore, adding the arrow $V_j \rightarrow W_w$ into $\Gm$ does not create a cycle, because otherwise $V_j$ would be a descendant of $W_w$ in $\Gm$.
\end{proof}

\begin{corollary}
\label{cor:move_fork_up}
    Let $\Gc$ be an admissible cluster-DAG, $\Gm$ be a compatible graph and $V^C$ be a cluster. If $\Gm$ contains a path which contains a fork on $V_i$ with $i>j$, then there exists a compatible graph $\Gm'$ which contains the same path except that the fork is on $V_j$.
\end{corollary}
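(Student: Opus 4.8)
The plan is to realise the fork relocation as two successive applications of Proposition~\ref{prop:move_arrow_up}. A fork on $V_i$ lying on a path $\pi$ consists of exactly two directed edges leaving $V_i$, say $V_i \rightarrow W_w$ and $V_i \rightarrow U_u$, where $W_w$ and $U_u$ are the two neighbours of $V_i$ along $\pi$ (necessarily distinct, since $\pi$ is a path). The goal is to produce a compatible graph in which these two edges are replaced by $V_j \rightarrow W_w$ and $V_j \rightarrow U_u$, leaving every other edge of $\pi$ untouched so that the fork is simply moved from $V_i$ to $V_j$.

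First I would apply Proposition~\ref{prop:move_arrow_up} to the edge $V_i \rightarrow W_w$, obtaining a compatible graph $\Gm$ (call it $\Gm_1$) that contains $V_j \rightarrow W_w$ but not $V_i \rightarrow W_w$. Before invoking the proposition a second time, I would verify that its hypotheses still hold in $\Gm_1$. The crucial observation is that any topological order $\tau$ of $\Gm$ witnessing $j<i$ (so $V_j$ precedes $V_i$, and hence precedes $W_w$ since $V_i\rightarrow W_w$) remains a valid topological order of $\Gm_1$: deleting an edge can never violate $\tau$, and the single edge added, $V_j \rightarrow W_w$, is oriented forward in $\tau$. Consequently the index convention of Notation~\ref{notation:base} may keep $V_j$ before $V_i$, the edge $V_i \rightarrow U_u$ (untouched by the first step) is still present, and $V_j$ still precedes $U_u$ in $\tau$.

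Then I would apply Proposition~\ref{prop:move_arrow_up} once more, now to $V_i \rightarrow U_u$ inside $\Gm_1$, yielding a compatible graph $\Gm'$ that contains $V_j \rightarrow U_u$ but not $V_i \rightarrow U_u$. Since neither step alters any edge of $\pi$ other than the two fork edges incident to $V_i$, the graph $\Gm'$ contains precisely the path $\pi$ with its fork relocated from $V_i$ to $V_j$, which is the claim.

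The main obstacle is the bookkeeping between the two applications: one must ensure that relocating the first branch of the fork neither destroys the second branch nor spoils the order relation $j<i$ that Proposition~\ref{prop:move_arrow_up} relies upon. This is exactly what the persistence of the topological order $\tau$ guarantees; it simultaneously certifies that $V_i \rightarrow U_u$ can still be moved to $V_j$ without creating a cycle (because $V_j$ still precedes $U_u$) and that the index labelling stays consistent, so no hidden subtlety arises from re-indexing after the first move.
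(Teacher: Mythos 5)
Your proof is correct and takes exactly the paper's approach: the paper's entire proof of this corollary reads ``Apply Proposition~\ref{prop:move_arrow_up} twice.'' Your additional verification that the topological order of $\Gm$ persists after the first edge move (so that the index relation $j<i$ and the acyclicity argument remain valid for the second application) is precisely the bookkeeping the paper leaves implicit, and it is sound.
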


\begin{proof}
    Apply Proposition \ref{prop:move_arrow_up} twice. 
\end{proof}

\begin{proposition}
\label{prop:move_arrow_down}
    Let $\Gc$ be an admissible cluster-DAG, $\Gm$ be a compatible graph and $V^C$ be a cluster. If there exists $i < j$ such that $\Gm$ contains the arrow $V_i \leftcomingarrow W_w$, where  $\leftcomingarrow$, then there exists a compatible graph $\Gm'$ that contains the arrow $V_{j} \leftcomingarrow W_w$ instead of $V_i \leftcomingarrow W_w$.
\end{proposition}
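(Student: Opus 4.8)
The plan is to mirror the proof of Proposition~\ref{prop:move_arrow_up}, using the dual reading of the topological order fixed in Notation~\ref{notation:base}: since $i<j$, the node $V_j$ comes strictly after $V_i$, so it is ``more of a descendant'' and receiving an arrowhead into it is harmless. Concretely, I would build $\Gm'$ by deleting the edge $V_i \leftcomingarrow W_w$ and inserting $V_j \leftcomingarrow W_w$, and then verify the two requirements for membership in $\C(\Gc)$: that the directed part of $\Gm'$ stays acyclic, and that the cluster-level edge pattern prescribed by Definition~\ref{def:compatible_graphs} is unchanged.

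I would then split on the type of $\leftcomingarrow$. If it is the bidirected edge $W_w \dashleftrightarrow V_i$, acyclicity is immediate since bidirected edges play no role in the directed part, and relocating the arrowhead within the cluster $V$ preserves compatibility through the existence (``there exists $(i,j)$'') clauses of Definition~\ref{def:compatible_graphs}. If it is the directed edge $W_w \to V_i$, then this edge forces $W_w$ to precede $V_i$ in the topological order of $\Gm$; combined with $i<j$ this yields that $W_w$ precedes $V_j$, so $W_w$ is not a descendant of $V_j$ and inserting $W_w \to V_j$ creates no cycle (deleting $W_w \to V_i$ clearly creates none). In both cases the inserted edge has the same type and the same cluster endpoints as the deleted one, so no cluster-level directed edge, bidirected edge, or self-loop of $\Gc$ is lost and none absent from $\Gc$ is introduced; should the target edge already be present in $\Gm$, I would instead simply delete $V_i \leftcomingarrow W_w$ and invoke Proposition~\ref{prop:remove_arrow}. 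This delivers $\Gm' \in \C(\Gc)$ with the required edge.

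The main obstacle, and the only genuinely delicate point, is the intra-cluster situation $W_w \in V$, where one must ensure the target $V_j \leftcomingarrow W_w$ joins distinct nodes (i.e.\ $w \neq j$) so that it is a legitimate edge and the self-loop clauses of Definition~\ref{def:compatible_graphs} still apply. In the directed sub-case this is automatic, because $W_w \to V_i$ already forces $w < i < j$; in the bidirected sub-case it is built into the statement, since writing $V_j \leftcomingarrow W_w$ presupposes $W_w \neq V_j$. Once this distinctness is settled, the acyclicity argument is a single line and the remaining compatibility bookkeeping is routine.
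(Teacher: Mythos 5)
Your proof is correct and follows essentially the same route as the paper's: the core step in both is that the topological-order convention of Notation~\ref{notation:base}, together with $i<j$ (and, in the directed case, the existing edge $W_w \rightarrow V_i$), gives $W_w \notin \Desc(V_j)_{\Gm}$, so relocating the arrowhead to $V_j$ cannot close a directed cycle. Your extra case analysis (bidirected edges, the target edge already being present, intra-cluster distinctness of $W_w$ and $V_j$) merely makes explicit edge cases that the paper's two-line proof leaves implicit.
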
 

\begin{proof}
    %Let $V_i \leftcomingarrow W_w$ be the arrow. 
    $i < j$, thus by the convention of Notation \ref{notation:base}, we know that $W_w$ is not a descendant of $V_j$. Therefore, adding the arrow $V_j \leftcomingarrow W_w$ does not create a cycle.
\end{proof}

\begin{proposition}[From \cite{ferreira2024identifyingmacroconditionalindependencies}]
\label{prop:active_walk}
    Let $\G$ be a mixed graph and $\Z$ be a subset of nodes. Let $\tilde{\pi} = \langle V^1, \dots, V^n \rangle$ be a walk from $X$ to $Y$ in $\G$. If $\tilde{\pi}$ is $\Z$-active, then $\pi = \langle U^1, \dots, U^m \rangle$, where $U^1 = V^1$ and $U^{k+1} = V^{\max \{ i \mid V^i = U^k \}}$, is a  is $\Z$-active path from $X$ to $Y$.
\end{proposition}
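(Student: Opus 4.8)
The plan is to recognise the construction in the statement as the classical \emph{last-occurrence shortcut} and to verify separately that it outputs a path and that this path is active. Writing $a_k \coloneqq \max\{i \mid V^i = U^k\}$ for the index of the final appearance of $U^k$ on the walk, the rule amounts to $U^{k+1} = V^{a_k+1}$: from the current vertex we jump to the position immediately after its last occurrence. First I would dispatch the bookkeeping. Since $U^{k+1}$ appears at position $a_k+1$, we have $a_{k+1} \ge a_k+1 > a_k$, so $(a_k)_k$ is strictly increasing and bounded by $n$; the process therefore halts, and it can only halt at $V^n = Y$, the unique vertex whose last occurrence is $n$. Strict monotonicity also makes the $U^k$ pairwise distinct (two equal vertices would share a single last occurrence), and each consecutive pair is joined by the actual $\tilde{\pi}$-edge between positions $a_k$ and $a_k+1$. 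Hence $\pi$ is a genuine path in $\G$ from $X$ to $Y$, and only its activeness remains.

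The central observation for activeness is a local one. The $\pi$-edge entering $U^k$ is the walk edge that \emph{arrives} at the occurrence $b_k \coloneqq a_{k-1}+1$ of $U^k$, whereas the $\pi$-edge leaving $U^k$ is the walk edge that \emph{departs} from the occurrence $a_k$; here $b_k \le a_k$, with equality precisely when $U^k$ is passed through only once between the jumps. Consequently the two edge-marks of $\pi$ at $U^k$ are exactly the left mark of $V^{b_k}$ and the right mark of $V^{a_k}$ in $\tilde{\pi}$. The non-collider case is then immediate: if $U^k$ is a non-collider in $\pi$, one of these two marks is a tail, so either $V^{b_k}$ (tail on its left) or $V^{a_k}$ (tail on its right) is a non-collider on the active walk $\tilde{\pi}$ and hence lies outside $\Z$; since this is the same vertex $U^k$, we obtain $U^k \notin \Z$, as required.

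The hard part will be the collider case: both marks are arrowheads and I must exhibit a descendant of $U^k$ lying in $\Z$. If $V^{a_k}$ carries an arrowhead also on its left, or $V^{b_k}$ an arrowhead also on its right, then that very occurrence is a collider of the active walk $\tilde{\pi}$, which already supplies a descendant in $\Z$. The genuinely new situation is $b_k < a_k$ together with $V^{b_k} \to V^{b_k+1}$ and $V^{a_k} \to V^{a_k-1}$: the walk leaves $U^k$ ``downward'' at $b_k$ and returns to $U^k$ ``downward'' at $a_k$, so the stretch $V^{b_k}, \dots, V^{a_k}$ is a loop based at $U^k$ whose first and last edges both point away from $U^k$.

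For this remaining case I would run an arrowhead-counting (``valley'') argument along the loop. Starting from the arrowhead into $V^{b_k+1}$ and walking forward, as long as no collider is encountered each vertex must emit a tail, so the arrowheads propagate into a directed chain $U^k \to V^{b_k+1} \to \cdots$. This chain cannot traverse the whole loop: the last loop edge places an arrowhead into $V^{a_k-1}$, so the chain's arrowhead into $V^{a_k-1}$ from the left would make $V^{a_k-1}$ a collider at the latest. Thus some interior vertex $W$ of the loop is a collider reached by a directed path from $U^k$, hence a descendant of $U^k$; being a collider of the active walk $\tilde{\pi}$, $W$ has itself or a descendant in $\Z$, and therefore so does $U^k$. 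This settles the collider case and completes the verification that $\pi$ is $\Z$-active. The one subtlety to keep in mind is that the ADMG has no self-loops, which guarantees the loop has length at least two, leaving room for the valley argument to run.
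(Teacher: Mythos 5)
The paper gives no proof of this proposition at all --- it is imported, typo and all, from \cite{ferreira2024identifyingmacroconditionalindependencies} --- so there is no internal argument to compare yours against; judged on its own, your proof is correct and complete. You rightly repair the statement's typo first (the recursion must read $U^{k+1}=V^{a_k+1}$, the vertex \emph{following} the last occurrence of $U^k$; as literally written it would give $U^{k+1}=U^k$). The skeleton is sound: strict monotonicity of the last-occurrence indices $a_k$ yields termination at $Y$, pairwise distinctness of the $U^k$, and adjacency of consecutive $U^k$, so $\pi$ is a genuine path; identifying the two marks of $\pi$ at $U^k$ with the left mark of occurrence $b_k=a_{k-1}+1$ and the right mark of occurrence $a_k$ of the walk transfers the non-collider case and the easy collider sub-case directly from occurrence-wise activeness of $\tilde{\pi}$; and in the remaining collider sub-case, where the walk leaves and re-enters $U^k$ through tails, your first-collider (``valley'') argument along the closed sub-walk produces a collider occurrence $W$ of $\tilde{\pi}$ that is reached from $U^k$ by a directed sub-walk, hence is a descendant of $U^k$, so that $W$ or one of its descendants --- in either case a descendant of $U^k$ --- lies in $\Z$, which is exactly the collider condition $\pi$ needs. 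Your closing remark is also the right guard rather than a decoration: since the graphs here have no self-loops, $a_k\ge b_k+2$ in that sub-case, so the loop has a nonempty interior in which $W$ can be found. In short, this is the standard last-occurrence walk-to-path reduction, and your write-up would serve as a self-contained substitute for the external citation the paper relies on.
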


\begin{proposition}
\label{prop:reduce_complexity_path}
    Let $\Gc$ be an admissible cluster-DAG, $\Gm$ be a compatible graph and $V^C$ be a cluster. Let $X^C$, $Y^C$ and $Z^C$ be pairwise disjoint subsets of nodes. Let $\pi^m$ be a $\Z^m$-active path from $\X^m$ to $\Y^m$ in $\Gm$. Then there exists a compatible graph $\Gm'$ that contains ${\pi^m}'$, a $\Z^m$-active path from $\X^m$ to $\Y^m$ such that ${\pi^m}'$ neither contains two collider nodes nor two fork nodes nor two chain nodes in $\Gm'$ in the same cluster.
\end{proposition}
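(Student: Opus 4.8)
The plan is to argue by induction on the length of the path, repeatedly excising one \emph{forbidden pair} (two colliders, two forks, or two chains belonging to the same cluster) while producing a new compatible graph and a new $\Z^m$-active walk, which Proposition~\ref{prop:active_walk} then collapses back to a $\Z^m$-active path. Since each excision strictly shortens the path, the process terminates, and at termination no cluster carries two nodes of the same type. Throughout, the guiding principle is that the moves of Propositions~\ref{prop:move_arrow_up} and~\ref{prop:move_arrow_down} let us slide a tail to a \emph{smaller} index and an arrowhead to a \emph{larger} index within a cluster, always staying inside $\C(\Gc)$, as dictated by the topological conventions of Notation~\ref{notation:base}.

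For two colliders in a cluster, say $V_a$ and $V_b$ with $a<b$, I would drag the two arrowheads of $V_a$ onto $V_b$ using Proposition~\ref{prop:move_arrow_down} twice (deduplicating with Proposition~\ref{prop:remove_arrow} if needed). In the resulting graph the path becomes a walk that visits $V_b$ twice, each time as a collider; since $V_b$ was already an ancestor of $\Z^m$ and we only modified its incoming edges, both visits are active, so the walk is $\Z^m$-active and Proposition~\ref{prop:active_walk} yields a shorter active path with one fewer collider in the cluster. The two-fork case is symmetric: by Corollary~\ref{cor:move_fork_up} I slide the two tails of $V_b$ down onto $V_a$, creating a walk that revisits the fork $V_a$ (still outside $\Z^m$, hence active) and again invoke Proposition~\ref{prop:active_walk}.

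The chain case is the main obstacle. Two chains cannot be \emph{merged} onto a single node the way colliders and forks can, because a chain carries one tail (which may only move to a smaller index) and one arrowhead (which may only move to a larger index), so the two incident edges are driven apart rather than together. Hence one chain must instead be \emph{eliminated} by a reroute, and the delicate point is keeping the rerouted walk $\Z^m$-active. When the reroute can be arranged so that the junction node remains a non-collider (a chain or a fork), activeness is automatic since that node lies outside $\Z^m$. The dangerous configuration is when the two chains' tails both feed into a collider lying between them on the path, where the natural shortcut strands the walk. My resolution is to exploit that this intervening collider, being active on $\pi^m$, is an ancestor of $\Z^m$, which forces the chain we reroute to be an ancestor of $\Z^m$ as well; I then slide the appropriate arrowhead with Proposition~\ref{prop:move_arrow_down} (retaining, via Proposition~\ref{prop:remove_arrow}, the outgoing edge that witnesses this ancestry) so that the junction becomes a collider that is genuinely active. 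In every sub-case Proposition~\ref{prop:active_walk} recovers a strictly shorter active path. Establishing this activeness uniformly across the orientation and path-order sub-cases—reduced by reversing the traversal of $\pi^m$ and by the index conventions of Notation~\ref{notation:base}—is where the real work lies; the collider and fork cases are comparatively routine.
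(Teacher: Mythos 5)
Your overall strategy coincides with the paper's: the same three cases, the same merging moves for colliders (arrowheads slid to the larger index via Proposition~\ref{prop:move_arrow_down}) and forks (tails slid to the smaller index via Corollary~\ref{cor:move_fork_up}), the rerouting idea for chains, and Proposition~\ref{prop:active_walk} to collapse walks back into paths. But there is a genuine gap in your termination argument. You induct on path length and claim that every excision strictly shortens the path. This fails in precisely the chain sub-case that your ``dangerous configuration'' does not cover: both chains directed \emph{along} the traversal of the path, with the larger-index node occurring \emph{first}, i.e.\ $\cdots E \rightarrow V_j \rightarrow F \rightarrow V_i \rightarrow \cdots$ with $i<j$ and $F$ the predecessor of $V_i$. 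Here no merge is possible (as you observe, arrowheads only move to larger indices and tails to smaller ones, so the two chains are driven apart), and the only available reroute --- adding $F \rightarrow V_j$ so the path becomes $\cdots E \rightarrow V_j \leftarrow F \rightarrow V_i \rightarrow \cdots$ --- yields a path on \emph{exactly the same vertex set}: nothing is excised, one chain has merely been converted into a collider. Note also that the activeness of this new collider rests on a different argument from the one you sketch: since $i<j$, $V_j$ cannot be an ancestor of $V_i$, so the original segment from $V_j$ to $V_i$ must contain a collider, whence $V_j \in \Anc(\Z^m,\Gm)$; your configuration of ``two tails feeding into an intervening collider'' is the case where the chains point in \emph{opposite} directions, which does shorten the path.

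Because of this sub-case, path length is not a well-founded measure and your induction does not go through as stated. The paper closes this with an ordering (potential) argument instead: chain-elimination steps strictly decrease the number of chains of the path inside the cluster, even though they may leave the length unchanged and may create new colliders or forks; collider- and fork-merging steps strictly shorten the path and never create new chains (the surviving vertices keep their triplet types after the walk is shortcut). Hence one first eliminates chains until at most one remains in the cluster, and only then merges colliders and forks. Equivalently, the lexicographic measure consisting of the number of chains in the cluster followed by the path length strictly decreases at every step. Your proof needs this reordering, or some other valid measure, to terminate; everything else in your outline matches the paper's argument.
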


\begin{proof}
    First, we show a strategy to reduce the number of similar triplet on $V^m$. We distinguish three cases:
    \begin{enumerate}
        \item \label{proof:prop:reduce_complexity_path:1} If $\pi^m$ contains two colliders $\rightcomingarrow V_i \leftcomingarrow$ and $\rightcomingarrow V_j \leftcomingarrow$ with $i < j$. By applying Proposition \ref{prop:move_arrow_down} twice, there exists a compatible graph $\Gm'$ in which all the aforementioned arrows are pointing toward $V_j$. Thus, $\Gm'$  contains a $\Z^m$-active walk from $\X^m$ to $\Y^m$ which does not contain $V_i$. By Proposition \ref{prop:active_walk}, $\Gm'$ contains a $\Z^m$-active path from $\X^m$ to $\Y^m$ that does not use $\rightcomingarrow V_i \leftcomingarrow$ anymore. Therefore, this strategy reduces the number of collider on $V^m$ by one as long as there is at least two colliders on $V^m$.
        
        \item \label{proof:prop:reduce_complexity_path:2}  If $\pi^m$ contains two forks $\leftarrow V_i \rightarrow$ and $\leftarrow V_j \rightarrow$ with $i < j$. By applying Corollary \ref{cor:move_fork_up}, there exists a compatible graph $\Gm'$ in which all the aforementioned arrows are pointing away from $V_i$. Thus $\Gm'$ contains a $\Z^m$-active walk from $\X^m$ to $\Y^m$. By Proposition \ref{prop:active_walk}, $\Gm'$ contains a $\Z^m$-active path from $\X^m$ to $\Y^m$ that does not use $\leftarrow V_j \rightarrow$ anymore. Therefore, this strategy reduces the number of fork on $V^m$ by one as long as there is at least two forks on $V^m$.
        
        \item \label{proof:prop:reduce_complexity_path:3} If $\pi^m$ contains two chains $\rightarrow V_i \rightarrow$ and $\rightarrow V_j \rightarrow$ with $i < j$. Without loss of generality, we can assume that the arrows around $V_i$ are pointing in the same direction as the order of vertices in the path. Indeed, d-separation between $\X^m$ and $\Y^m$ is symmetric, thus, if it's not the case, we can consider the path from $\Y^m$ to $\X^m$ instead. We distinguish two cases:
        \begin{itemize}
            \item If $V_i$ is before $V_j$ in $\pi^m$. Then, by Proposition \ref{prop:move_arrow_down}, there exists a compatible graph $\Gm'$ in which the arrow pointing to $V_i$ is now pointing toward $V_j$. We distinguish two cases:
            \begin{itemize}
                \item If the arrows around $V_j$ are pointing in the same direction as the order of vertices in the path, then we create a new chain on $V_j$. This creates a path that bypasses $V_i \rightarrow \dots \rightarrow V_j$. This path is a $\Z^m$-active path from $\X^m$ to $\Y^m$ and it does not pass through $V_i$ anymore. In this case, we reduce the number of chains by one.
                
                \item If the arrows around $V_j$ are pointing in the opposite direction of the order of vertices in the path, then we create a collider on $V_j$. The chains on $V_i$ and $V_j$ are not pointing in the same direction. Thus, $\pi^m$ contains a collider between $V_i$ and $V_j$. Let $C_c$ be the first collider of $\pi^m$ before $V^j$. We know that $V_j$ is an ancestor of $C_c$ and that $C_c$ is an ancestor of an element of $\Z^m$. Therefore, the new-collider is not blocking and we have created a $\Z^m$-active path from $\X^m$ to $\Y^m$ that does not pass through $V_i$ anymore. In this case, we reduce the number of chains by two but increase the number of colliders by one.
            \end{itemize}
            
            \item Otherwise, $V_j$ is before $V_i$ in $\pi^m$. We distinguish two cases:
            \begin{itemize}
                \item If the arrows around $V_j$ are pointing in the opposite direction of the order of vertices in the path. By Proposition \ref{prop:move_arrow_up}, there exists a compatible graph $\Gm'$ in which the arrow pointing away from $V_j$ is now pointing away from $V_i$. This creates a new fork on $V_i$ and a new path that bypasses $V_j \leftarrow \dots \rightarrow V_i$. $\pi^m$ is active, thus $V_i \notin \Z^m$. Thus the new fork is not blocking and the new path is indeed a $\Z^m$-active path from $\X^m$ to $\Y^m$. Moreover, this path does not pass through $V_j$ anymore. In this case, we reduce the number of chains by two but increase the number of forks by one.
                
                \item If the arrows around $V_j$ are pointing in the same direction as the order of vertices in the path. Let $F_f$ be the predecessor of $V_i$ in $\pi^m$. We know that $F_f \neq V_j$ because $i < j$. By Proposition \ref{prop:move_arrow_down} there exists a compatible graph $\Gm'$ that contains the arrow $F_f \rightarrow V_j$. Thus $\Gm'$ contains a path that bypasses  $V_j \rightarrow \dots F_f \rightarrow V_j$ by using $\rightarrow V_j \leftarrow F_f \rightarrow V_i$. Since $i < j$, we know that $V_j$ is not an ancestor of $V_i$ in $\Gm$. Thus, $\pi^m$ contains a collider between $V_j$ and $V_j$. Thus, $V_j \in \Anc(\Z^m, \Gm)$. Thus, $\rightarrow V_j \leftarrow$ is not blocking. Moreover $F_f$ is not blocking because otherwise, it would have been blocking $\pi^m$ in $\Gm$. Therefore, the new path is a $\Z^m$-active path from $\X^m$ to $\Y^m$ and it does not use $\rightarrow V_j \rightarrow$ anymore.  In this case, we reduce the number of chains by one and increase the number of colliders by one.
            \end{itemize}
        \end{itemize}
        Therefore, in all the cases, this strategy always strictly reduce the number of chains in $V^m$, but it may increase the number of other triplets.
    \end{enumerate}
    
    To prove the Proposition, we start by applying Strategy \ref{proof:prop:reduce_complexity_path:3} as long as it is possible. Hence, we get a compatible graph that contains a $\Z^m$-active path from $\X^m$ to $\Y^m$ that  does not contain two chains on $V^m$. Then, we use Strategy \ref{proof:prop:reduce_complexity_path:1} and Strategy \ref{proof:prop:reduce_complexity_path:2} as long as it is possible to get a compatible graph $\Gm'$ that contains ${\pi^m}'$, a $\Z^m$-active path from $\X^m$ to $\Y^m$ such that ${\pi^m}'$ does not contain two colliders nor two fork nor two chains on $V^m$.
\end{proof}

\begin{theorem}[Infinity is at most four]
\label{th:infinity_leq_four}
    Let $\Gc$ be an admissible cluster-DAG and $\G^c_{\leq4}$ be the corresponding cluster-DAG where all clusters of size greater than 4 are reduced to size 4. Let $\X^C$, $\Y^C$ and $\Z^C$ be pairwise disjoint subsets of nodes. The following propositions are equivalent:
    \begin{enumerate}
        \item \label{th:infinity_leq_four:1}
        $\exists ~ \Gm \in \C\left(\Gc\right), ~~
            \exists~ \pi^m \text{ path from $\X^m$ to $\Y^m$ s.t. } \Z^m \text{ d-connects } \pi^m.$
        
        \item \label{th:infinity_leq_four:2}
        $\exists ~ \G^{m}_{\leq4} \in \C\left(\G^c_{\leq4}\right), ~~
            \exists~ \pi^m \text{ path from $\X^m$ to $\Y^m$ s.t. } \Z^m \text{ d-connects } \pi^m.$ 
    \end{enumerate}
\end{theorem}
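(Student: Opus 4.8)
The plan is to prove the two implications separately, the direction $\ref{th:infinity_leq_four:2}\Rightarrow\ref{th:infinity_leq_four:1}$ being a straightforward lifting and $\ref{th:infinity_leq_four:1}\Rightarrow\ref{th:infinity_leq_four:2}$ the substantive one. Throughout I would exploit that $\G^c_{\leq4}$ has exactly the same clusters and the same cluster-level edges (directed, bidirected, and self-loops) as $\Gc$; only the cardinalities $\#V$ are capped at $4$. In particular, cluster-level directed reachability---hence the relation ``$V^C$ is a cluster-ancestor of a cluster meeting $\Z^C$''---is identical in $\Gc$ and in $\G^c_{\leq4}$.

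For $\ref{th:infinity_leq_four:2}\Rightarrow\ref{th:infinity_leq_four:1}$ I would take $\G^m_{\leq4}\in\C(\G^c_{\leq4})$ with its d-connecting path and build $\Gm\in\C(\Gc)$ by adding, to each cluster $V$ with $\#V>4$, the missing vertices $V_5,\dots,V_{\#V}$ as \emph{isolated} vertices. Since no micro-edge is created, every ``iff'' clause of Definition~\ref{def:compatible_graphs} still holds (each cluster-edge and self-loop of $\Gc$ is already realized among $V_1,\dots,V_4$), no cycle is introduced, and the original path---lying entirely in the old vertices---keeps its colliders' descendants in $\Z^m$ and its non-colliders outside $\Z^m$, so it remains $\Z^m$-active.

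For the converse I would start from $\Gm\in\C(\Gc)$ and a $\Z^m$-active path and first apply Proposition~\ref{prop:reduce_complexity_path} to obtain a compatible $\Gm'$ carrying a $\Z^m$-active path $\pi'$ that uses at most one collider, one fork, and one chain inside each cluster. The key step is then a counting one: on $\pi'$ each cluster $V$ contributes at most these three interior vertices plus at most one endpoint of $\pi'$ (an endpoint lies in $\X^m$ or $\Y^m$, and since $\X^C,\Y^C$ are disjoint a cluster hosts at most one of the two endpoints), so $\pi'$ meets $V$ in at most four vertices. Keeping these (relabelled $V_1,\dots,V_4$ so as to respect the topological order of $\Gm'$, cf.\ Notation~\ref{notation:base}) lets me view $\pi'$ as living in the reduced cluster-DAG, and I would construct $\G^m_{\leq4}\in\C(\G^c_{\leq4})$ containing it by realizing the edges of $\pi'$ and then adding whatever edges among the kept vertices are needed so that every cluster-edge and self-loop of $\G^c_{\leq4}$ is witnessed.

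The main obstacle is preserving the activity of $\pi'$, i.e.\ making each collider regain a descendant in $\Z^m$ within the four-vertex budget. Here I would use the preservation of cluster-level ancestry: a collider $c\in V$ reaching $\Z^m$ in $\Gm'$ forces a directed cluster-path from $V^C$ to some cluster of $\Z^C$, which survives in $\G^c_{\leq4}$ and may be taken simple; I realize it at the micro level by routing through a \emph{single} vertex per intermediate cluster (realizing both incident cluster-edges at that vertex, so no intra-cluster detour is needed), reusing an already-kept vertex rather than spending a fifth one. Finally I would verify acyclicity---orienting every added edge to agree with the topological order inherited from $\Gm'$, with colliders placed late and forks early as in Propositions~\ref{prop:move_arrow_down} and~\ref{prop:move_arrow_up}---and that the descendant-edges cannot spoil $\pi'$, since they only help colliders and leave the non-collider/$\Z$ incidences untouched. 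The delicate point the argument must pin down is that the simultaneous routing of all colliders neither overflows the four-vertex budget of a shared cluster nor closes a directed cycle.
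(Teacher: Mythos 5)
Your skeleton matches the paper's up to the decisive step: the easy direction ($\ref{th:infinity_leq_four:2} \Rightarrow \ref{th:infinity_leq_four:1}$, adding isolated vertices) is identical, and for the hard direction the paper also begins by invoking Proposition~\ref{prop:reduce_complexity_path} to get one collider, one fork and one chain per cluster. The gap is exactly the point you flag at the end and leave open: how the colliders on the reduced path keep a descendant in $\Z^m$ within the four-vertex budget and without closing a directed cycle. This is the crux of the theorem, not a verification detail, and your proposed rule --- take a simple cluster-level directed path from the collider's cluster to $\Z^C$ and realize it through a \emph{single} vertex per intermediate cluster, reusing kept vertices --- fails in general. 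Concretely, suppose two colliders $c_1, c_2$ on the path have descendant paths in $\Gm'$ traversing two clusters $W$ and $U$ in opposite orders: $c_1 \rightarrow W_a \rightarrow U_b \rightarrow z_1$ and $c_2 \rightarrow U_c \rightarrow W_d \rightarrow z_2$. This is perfectly consistent with acyclicity of $\Gm'$ (since $W_a \neq W_d$ and $U_b \neq U_c$), and both cluster edges $W^C \rightarrow U^C$ and $U^C \rightarrow W^C$ may well exist, this being a cyclic cluster-DAG. Realizing both cluster-level paths through one hub vertex in $W$ and one in $U$ forces an arrow from the $W$-hub to the $U$-hub and an arrow from the $U$-hub to the $W$-hub, i.e.\ a directed cycle; ``orienting added edges to agree with the topological order of $\Gm'$'' cannot help, because once the hubs are fixed the two required edges point in opposite directions between the same pair of vertices.

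The paper resolves this with in-place surgery rather than rebuilding, and the choice of hub is the whole trick. Inside each cluster $V$ it considers the set of indices $v$ such that $V_v \in \Desc(C_c, \Gm') \cap \Anc(\Z^m, \Gm')$ for some collider $C_c$ on the path, and uses Proposition~\ref{prop:move_arrow_down} to move all arrows \emph{incoming} into these vertices down onto the single vertex of maximal index in this set. Each such move is cycle-safe by that proposition; no arrow is ever deleted, so the graph stays compatible with $\Gc$ and every cluster edge remains witnessed (in your rebuild, re-witnessing the remaining cluster edges acyclically is yet another unchecked step); and ancestry of $\Z^m$ survives because outgoing arrows are untouched and a directed path leaving the maximal-index hub cannot revisit a collapsed vertex of smaller index. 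In the example above, after the collapse one gets $c_1 \rightarrow W_d \rightarrow z_2$ and $c_2 \rightarrow U_b \rightarrow z_1$: connections are rerouted through \emph{existing} outgoing arrows, possibly towards a different element of $\Z^m$ --- something a per-collider rebuilding of paths cannot imitate. The four kept vertices are then $V_1$ (fork), the chain position, the descendant hub, and $V_{\#V}$, onto which all remaining arrows are swept by Propositions~\ref{prop:move_arrow_up} and~\ref{prop:move_arrow_down}; in particular your ``endpoint'' budget slot is handled there for free, since $\X^C$, $\Y^C$, $\Z^C$ are unions of whole clusters.
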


\begin{proof}
    We prove the two implications:
    \begin{itemize}
        \item $\ref{th:infinity_leq_four:2} \Rightarrow \ref{th:infinity_leq_four:1}$: We can add as many vertices without arrows as necessary.
        
        \item $\ref{th:infinity_leq_four:1} \Rightarrow \ref{th:infinity_leq_four:2}$:Let \(\Gm\) be a compatible graph containing a \(\Z^m\)-active path \(\pi^m\) from \(\X^m\) to \(\Y^m\). Let \(V^m\) be a cluster of size greater than 4.  By Proposition \ref{prop:reduce_complexity_path}, there exists a compatible graph \(\Gm'\) that contains a \(\Z^m\)-active path \({\pi^m}'\) from \(\X^m\) to \(\Y^m\), where \({\pi^m}'\) does not contain two colliders nor two forks nor two chains on $V^m$.  By Corollary \ref{cor:move_fork_up}, we may assume that if a fork is present, it is located at \(V_1\).  Next, consider the set  
        \[\C \coloneqq \left\{ v \mid \exists ~ C_c \text{ collider on } {\pi^m}' \text{ such that } V_v \in \Desc(C_c, \Gm') \cap \Anc(\\Z^m, \Gm') \right\}.\]  
        For every \(v \in \C\), we apply Property \ref{prop:move_arrow_down} to move all incoming arrows into \(V_v\) to \(V_{\max \C}\). This transformation yields a compatible graph in which any collider on \({\pi^m}'\) (if present) is now located at \(V_{\max \C}\). Since this collider remains an ancestor of \(\Z^m\), it remains non-blocking. Similarly, any other colliders (if present) on \({\pi^m}'\) remain ancestors of \(\Z^m\) and thus remain non-blocking.  Now, let \(V_i\) be the element of \(V^m\) where \({\pi^m}'\) contains a chain (if any). For all \(V_v \notin \{V_1, V_{\max \C}, V_i\}\), we use Propositions \ref{prop:move_arrow_up} and \ref{prop:move_arrow_down} to redirect all arrows towards \(V_v\) to \(V_{\#V^m}\). As a result, we obtain a compatible graph in which at most 4 elements of \(V^m\) have arrows. Consequently, we can remove all other elements, yielding a graph compatible with \(\Gc\) in which the cardinality of \(V^m\) has been reduced to 4.  
        Repeating this process for all other clusters \(V^m\), we obtain a compatible graph with \(\G^c_{\leq4}\) that still contains a \(\Z^m\)-active path from \(\X^m\) to \(\Y^m\).
    \end{itemize}
\end{proof}

\newpage
\subsection{A graphical criterion for d-separation in cluster-DAGs}

\begin{definition}[Minimal Compatible Graph]
\label{def:gm_min}
    Let $\Gc$ be an admissible cluster-DAG. Its corresponding minimal compatible graph is $\Gt = \left( \mathcal{V}_{\min}, \mathcal{E}_{\min}  \right)$, the mixed graph defined by the following procedure:
    \begin{itemize}
        \item $\mathcal{V}_{\min} = \V^m \coloneqq  \bigcup_{V^C \in \mathcal{V}^C} \left\{ V_1, \dots, V_{\#V^C}\right\}$. 
        \begin{enumerate}
            \item For all dashed-bidirected-arrows $U^C \dashleftrightarrow V^C$ in $\Gc$, add the dashed-bidirected-arrow $U_i \dashleftrightarrow V_j$ for all $i,j \in \{1,\dots, \#U^C\} \times \{1,\dots, \#V^C\}$ such that $U_i \neq V_j$.\label{def:gm_min:1}
            
            \item For all self-loop $\leftselfloop V^C$, add the arrow $V_i \rightarrow V_j$ for all $i,j \in \{1,\dots, \#V^C\}^2$ such that $i <j$. \label{def:gm_min:2}
            
            \item For all arrows $U^C \rightarrow V^C$, with $U^C \neq V^C$, add the arrow $U_1 \rightarrow V_{\#V^C}$. \label{def:gm_min:3}
            
        \end{enumerate}
    \end{itemize}
\end{definition}

Figure \ref{fig:example_gm_min} gives an example of a cluster DAG and its minimal compatible graph.

\begin{figure}[t]
    \begin{tikzpicture}[ ->, >=stealth, scale=3]
        % Nodes
        \node (A) at (0, 0) {${}^3A$};
        \node (B) at (1, 0) {${}^2B$};
        \node (C) at (0.5, -0.7071) {${}^1C$};

        % Edges
        \draw (C) -- (A);
        \path (C) edge [<->, dashed, bend right = 15] node {} (B);
        \path (A) edge[bend left=35] (B);
        \path (B) edge[bend left=35] (A);
        \path (A) edge [->, loop left, looseness=5, in=155, out=240] node {} (A);

        \end{tikzpicture} \hfill
        \begin{tikzpicture}[->, >=stealth, scale=1.1]
        % Micro-level Nodes
        \node[anchor=center] (A1) at (0.0000, 0.6667) {$A_1$};
        \node[anchor=center] (A2) at (-.5774, -.3333) {$A_2$};
        \node[anchor=center] (A3) at (0.5774, -.3333) {$A_3$};
        
        \node[anchor=center] (B1) at (3.0000, 0.6667) {$B_1$};
        \node[anchor=center] (B2) at (3.0000, -.3333) {$B_2$};
        
        \node[anchor=center] (C)  at (1.50000, -2.0284) {$C_1$};
        
        \node[draw, dashed, ellipse, scale= 7.0, xscale=1, yscale=1, anchor=center]  at (0, 0) {};
        \node[draw, dashed, ellipse, scale= 1.85, xscale=2, yscale=4, anchor=center]  at (3, 0.1) {};
        \node[draw, dashed, ellipse, scale= 3.2, xscale=1, yscale=1, anchor=center]  at (1.5000, -2.0284) {};

        \path (C) edge (A3);
        \path (C) edge [<->, dashed, bend right = 75] node {} (B1);

        \path (C) edge [<->, dashed, bend right = 20] node {} (B2);

        \draw (A1) -- (B2);
        \draw (A1) -- (A2);
        \draw (A1) -- (A3);
        \draw (A2) -- (A3);
        \draw (B1) -- (A3);

    \end{tikzpicture}
        \caption{Example of Cluster-DAG (left) its minimal compatible graph (right).}
        \label{fig:example_gm_min}
\end{figure}
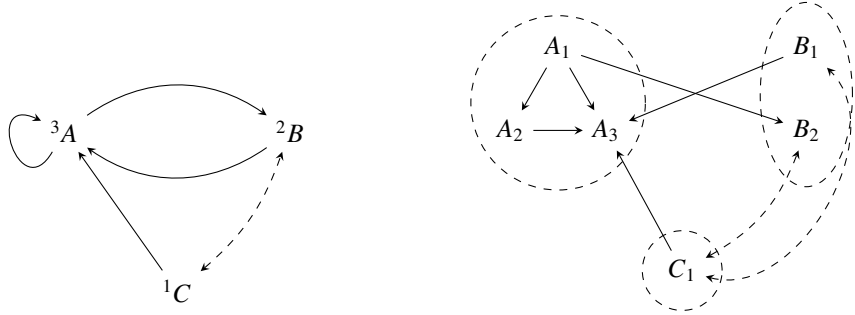

\begin{lemma}
\label{lemma:G3_compatible}
    Let $\Gc$ be an admissible cluster-DAG. Then, $\Gt$, its minimal compatible graph, is compatible with $\Gc$.
\end{lemma}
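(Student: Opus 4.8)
The plan is to verify, clause by clause, that $\Gt$ satisfies every requirement of Definition~\ref{def:compatible_graphs}: that it is an ADMG on the correct vertex set whose induced cluster-level edges are exactly those of $\Gc$. The vertex-set clause is immediate, since by construction $\mathcal{V}_{\min} = \V^m$. For the four edge-correspondence clauses I would argue by matching each micro-edge to the unique construction step able to produce it. A directed edge $A_i \rightarrow B_j$ with $A^C \neq B^C$ can only arise from step~\ref{def:gm_min:3} (step~\ref{def:gm_min:1} produces bidirected edges and step~\ref{def:gm_min:2} only intra-cluster directed ones), so such an edge lies in $\Gt$ if and only if $A^C \rightarrow B^C \in \E^C_D$; symmetrically, an intra-cluster directed edge $A_i \rightarrow A_j$ can only come from step~\ref{def:gm_min:2}, which fires exactly when $\leftselfloop A^C \in \E^C_D$; and the two bidirected clauses (inter-cluster and self-loop) are handled identically by step~\ref{def:gm_min:1}, the constraint $U_i \neq V_j$ ensuring no degenerate self-bidirected edge is ever created. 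These verifications are routine bookkeeping against Definition~\ref{def:gm_min}.

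The substantive point, and the main obstacle, is that $\Gt$ is genuinely acyclic, hence an ADMG at all. Here I would exploit the asymmetry baked into the construction: by step~\ref{def:gm_min:3} every inter-cluster directed edge leaves a cluster from its first vertex $U_1$ and enters a cluster at its last vertex $V_{\#V^C}$, while by step~\ref{def:gm_min:2} every intra-cluster directed edge strictly increases the index. Consequently, for any cluster $V^C$ with $\#V^C \geq 2$, the vertex $V_{\#V^C}$ has \emph{no} outgoing directed edge whatsoever: an intra-cluster edge would require a strictly larger index, and an inter-cluster edge would have to emanate from $V_1 \neq V_{\#V^C}$. Thus $V_{\#V^C}$ is a directed sink, and it is moreover the only vertex of $V^C$ at which an inter-cluster edge can arrive.

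From this I would derive acyclicity by contradiction. Suppose $\Gt$ contained a directed cycle $\gamma$. It cannot be confined to a single cluster, since intra-cluster edges strictly increase the index and so cannot close up; hence $\gamma$ uses at least one inter-cluster edge. If $\gamma$ met a cluster $V^C$ with $\#V^C \geq 2$, then at some point it would enter $V^C$ from another cluster, necessarily arriving at $V_{\#V^C}$; but $V_{\#V^C}$ is a directed sink, so $\gamma$ could not continue, a contradiction. Therefore $\gamma$ visits only size-$1$ clusters, and contracting each such cluster to its node shows that $\gamma$ projects onto a directed cycle of $\Gc$ all of whose clusters have size $1$ — precisely a ``cycle on cluster of size~$1$'', which an admissible $\Gc$ does not contain. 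Hence no such $\gamma$ exists and $\Gt$ is acyclic.

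Finally I would note that the directed part of $\Gt$ carries no self-loops, since step~\ref{def:gm_min:2} only adds $V_i \rightarrow V_j$ with $i<j$, and that step~\ref{def:gm_min:1} creates no bidirected self-loop because of the constraint $U_i \neq V_j$; so $\Gt$ is a bona fide acyclic directed mixed graph. Combined with the edge-correspondence clauses, this yields $\Gt \in \C(\Gc)$. The only delicate step is the acyclicity argument of the third paragraph; the rest is a direct reading of Definition~\ref{def:gm_min} against Definition~\ref{def:compatible_graphs}.
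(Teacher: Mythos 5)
Your proof is correct and takes essentially the same route as the paper: the edge-correspondence clauses are dispatched as immediate from the construction, and the real work is acyclicity, argued by contradiction using the same structural asymmetry (for a cluster of size $\geq 2$, inter-cluster edges can only leave from $V_1$ and arrive at $V_{\#V}$, so $V_{\#V}$ is a sink and $V_1$ has no incoming edge), forcing any directed cycle to live entirely on size-$1$ clusters and thus project to a cycle on size-$1$ clusters of $\Gc$, contradicting admissibility. The only cosmetic difference is that you rule out size-$\geq 2$ clusters on the cycle directly via the sink property, where the paper walks along the cycle by induction; the underlying idea is identical.
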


\begin{proof}
By construction, all arrows in $\Gc$ are represented in $\Gt$. Moreover, all arrows that are added correspond to an arrow in $\Gc$. Therefore, we only need to check for acyclicity to prove that $\Gt$ is a compatible graph with $\Gc$. 

By contradiction, let us assume that $\Gt$ contains a cycle $\pi$. First, we know that $\pi$ is not within a single cluster. Indeed, in each cluster $V$, $\Gt_{\mid V}$ is $V_{\#V}$-rooted tree. Thus, $\pi$ encounters at least two clusters and has an arrow between two clusters. Let $A_1 \rightarrow B_{\#B}$ be such an arrow. Necessarily, $\#A^C = 1$. Indeed, if $\#A^C \geq 2$, then no arrow in $\Gt$ is pointing on $A_1$. Similarly, we can show that $\#B^C = 1$. Thus, the next arrow cannot be pointing into $B^C$, thus the next arrow is also an arrow between two clusters. By induction, we show that all the clusters encountered by $\pi$ have a cardinal of $1$. This contradicts the hypothesis that $\Gc$ is an admissible cluster-DAG. Therefore, $\Gt$ is acyclic. 

Therefore, $\Gt$ is a compatible graph with $\Gc$.
\end{proof}

\begin{lemma}
\label{lemma:gm_cup_g3_compatible}
    Let $\Gc$ be a cluster-DAG, $\Gt$ be its corresponding minimal compatible graph, and $\Gm$ be a compatible graph. Then,  $\Gt \cup \Gm$  is a compatible graph.
\end{lemma}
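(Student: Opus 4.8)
The plan is to verify the clauses of Definition~\ref{def:compatible_graphs} for $\Gt \cup \Gm$, splitting them into the edge–projection (``iff'') conditions, which are formal, and acyclicity, which is the real content. I identify vertices through the shared set $\V^m$ and fix once and for all a global topological order $\prec$ of $\Gm$ inducing the indexing of Notation~\ref{notation:base}, so that on each cluster $V$ it restricts to $V_1 \prec \cdots \prec V_{\#V^C}$ and every directed edge of $\Gm$ respects $\prec$. The point that needs care is that I build $\Gt$ from this same order, so that the symbols $V_1,\, V_{\#V^C}$ in Definition~\ref{def:gm_min} denote the $\prec$-least and $\prec$-greatest vertices of their cluster; by Lemma~\ref{lemma:G3_compatible} this relabelled $\Gt$ is still compatible with $\Gc$.

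For the projection conditions I would note that the cluster projection of a union of micro-graphs is the union of the projections. Fix distinct clusters $A^C, B^C$: if $A^C \to B^C \in \Gc$ then, $\Gm$ being compatible, a witnessing $A_i \to B_j$ already lies in $\Gm \subseteq \Gt \cup \Gm$; conversely any $A_i \to B_j$ in $\Gt \cup \Gm$ lies in $\Gt$ or $\Gm$, both compatible, forcing $A^C \to B^C \in \Gc$. So no spurious inter-cluster edge appears and every one is witnessed; the identical argument settles bidirected inter-cluster edges and the two self-loop conditions.

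The substantive step is acyclicity, which I reduce to showing every directed edge of $\Gt \cup \Gm$ respects $\prec$ — this excludes directed cycles and makes $\Gt \cup \Gm$ an ADMG. Edges from $\Gm$ respect $\prec$ by the construction of $\prec$; intra-cluster edges $V_i \to V_j$ of $\Gt$ (step~2) have $i<j$, hence $V_i \prec V_j$; and for an inter-cluster edge $U_1 \to V_{\#V^C}$ of $\Gt$ (step~3, arising from $U^C \to V^C \in \Gc$), compatibility of $\Gm$ supplies some $U_i \to V_j$ in $\Gm$, whence $U_1 \preceq U_i \prec V_j \preceq V_{\#V^C}$ and so $U_1 \prec V_{\#V^C}$; bidirected edges are irrelevant. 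This last inequality, and the labelling coherence it rests on, is the main obstacle: it holds only because $U_1$ and $V_{\#V^C}$ are the $\prec$-extremal vertices of their clusters, which is exactly why $\Gt$ must be read off the same topological order of $\Gm$ used throughout; with an unrelated labelling $U_1 \prec V_{\#V^C}$ can fail and the union acquire a cycle. Assembling the two stages yields $\Gt \cup \Gm \in \C(\Gc)$.
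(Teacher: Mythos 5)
Your proof is correct, and it follows the paper's own decomposition: the projection (``iff'') clauses of Definition~\ref{def:compatible_graphs} are immediate because both $\Gt$ and $\Gm$ are compatible, and the real content is acyclicity, handled via the Notation~\ref{notation:base} indexing with the same three-way case split (bidirected, intra-cluster, inter-cluster). Where you genuinely diverge is in how acyclicity is certified. The paper argues arrow by arrow: for each arrow $a$ of $\Gt$ not in $\Gm$ it checks that adding $a$ alone creates no cycle, invoking Propositions~\ref{prop:move_arrow_up} and~\ref{prop:move_arrow_down} for the inter-cluster case. You instead exhibit a single global topological order $\prec$ of $\Gm$ consistent with the cluster indexing and verify that \emph{every} directed edge of the union respects $\prec$, inlining the inequality $U_1 \preceq U_i \prec V_j \preceq V_{\#V^C}$ that those propositions encapsulate. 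Your version is the tighter one: arrows that are individually harmless can jointly form a cycle, so per-arrow non-creation of cycles does not by itself yield acyclicity of the simultaneous union; the single-order argument discharges exactly the step the paper leaves implicit. You also make explicit the labelling coherence issue --- that $\Gt$ must be constructed relative to the same topological order that indexes $\Gm$ --- which the paper compresses into ``label indices according to Notation~\ref{notation:base}'' and which, as you rightly note, is precisely where the statement would fail under an unrelated labelling.
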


\begin{proof}
Since $\Gm$ and $\Gt$ are compatible, then we only need to check for acyclicity. Let label indices according to Notation \ref{notation:base}. Let $a$ be an arrow in $\Gt$ that is not in $\Gm$. We distinguish three cases:
\begin{itemize}
    \item If $a$ is a dashed-bidirected arrow, then adding $a$ does not create a cycle.
    
    \item If $a$ is a directed arrow inside a cluster, since $\Gm$ follows Notation \ref{notation:base}, then $a$ can be added without creating a cluster.
    
    \item Otherwise, $a$ corresponds to an arrow between two clusters $V^m$ and $U^m$. $\Gm$ is compatible thus it also contains an arrow from $U^m$ to $V^m$. By applying Propositions \ref{prop:move_arrow_up} and \ref{prop:move_arrow_down}, we see that we can add $a$ without creating a cycle.
\end{itemize}

Therefore, $\Gt \cup \Gm$  is a compatible graph.
\end{proof}

\begin{definition}
\label{def:unfolded_graph}
    Let $\Gc$ be an admissible cluster-DAG and $\Gt =  (\mathcal{V}_{\min}, \mathcal{E}_{\min})$ be its corresponding minimal compatible graph. Its corresponding unfolded graph is $\Gcu = \left( \mathcal{V}_{\text{u}}, \mathcal{E}_{\text{u}}  \right)$, the mixed graph defined by the following procedure:
    \begin{itemize}
        \item $\mathcal{V}_{\text{u}} \coloneqq  \mathcal{V}_{\min}$.
        
        \item Let us consider the following set:
        \[
            \E_{\text{to choose}} \coloneqq \left\{
            U_i \rightarrow V_j \,\middle|\,
            \begin{cases}
                U^C \rightarrow V^C \subseteq \Gc, \text{ and,} \\
                U_i \rightarrow V_j \text{ does not create a cycle in } \Gt
            \end{cases}
            \right\}
        \]

        \noindent Then $\E_u \coloneqq \E_{\min} \cup \E_{\text{ "to choose"}}$
    \end{itemize}
\end{definition}

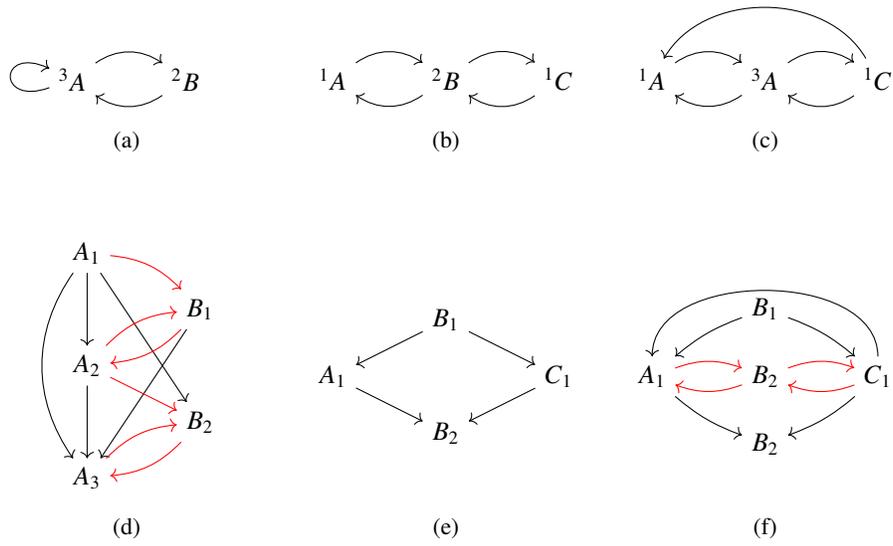
\begin{figure}
    % En haut à gauche
    \begin{subfigure}{0.3\textwidth}
        \centering
        \begin{tikzpicture}[->, scale = 1.5]
            \node (A) at (0,2) {${}^3A$};
            \node (B) at (1,2) {${}^2B$};
            \path (A) edge [bend left = 35] (B);
            \path (B) edge [bend left = 35] (A);
            \path (A) edge [->, loop left, looseness=5, in=155, out=200, min distance=0.5cm] node {} (A);
            \path (B) edge [->, loop right, looseness=5, in=20, out=65, min distance=0.5cm, draw=none] node {} (B); 
        \end{tikzpicture}
        \caption{}
        \label{fig:example_gcu:1}
    \end{subfigure}
    \hfill
    % En haut au milieu
    \begin{subfigure}{0.3\textwidth}
        \centering
        \begin{tikzpicture}[->, scale = 1.5]
            \node (A) at (0,2) {${}^1A$};
            \node (B) at (1,2) {${}^2B$};
            \node (C) at (2,2) {${}^1C$};
            \path (A) edge [bend left = 35] (B);
            \path (B) edge [bend left = 35] (A);
            \path (B) edge [bend left = 35] (C);
            \path (C) edge [bend left = 35] (B);
        \end{tikzpicture}
        \caption{}
        \label{fig:example_gcu:2}
    \end{subfigure}
    \hfill
    % En haut à droite
    \begin{subfigure}{0.3\textwidth}
        \centering
        \begin{tikzpicture}[->, scale = 1.5]
            \node (A) at (0,2) {${}^1A$};
            \node (B) at (1,2) {${}^3A$};
            \node (C) at (2,2) {${}^1C$};
            \path (A) edge [bend left = 35] (B);
            \path (B) edge [bend left = 35] (A);
            \path (B) edge [bend left = 35] (C);
            \path (C) edge [bend left = 35] (B);
            \path (C) edge [bend right = 60] (A);
        \end{tikzpicture}
        \caption{}
        \label{fig:example_gcu:3}
    \end{subfigure}

    \vspace{1cm}

    % En bas à gauche
    \begin{subfigure}{0.3\textwidth}
        \centering
        \begin{tikzpicture}[->, scale = 1.5]
            \node (A1) at (0,1) {$A_1$};
            \node (A2) at (0,0) {$A_2$};
            \node (A3) at (0,-1) {$A_3$};
            \node (B1) at (1,.5) {$B_1$};
            \node (B2) at (1,-.5) {$B_2$};
            \path (A1) edge  (A2);
            \path (A2) edge  (A3);
            \path (A1) edge [bend right = 35]  (A3);
            \path (A1) edge  (B2);
            \path (B1) edge  (A3);
            
            \path (A1) edge [bend left = 20, color=red] (B1);
            \path (B1) edge [bend left = 20, color=red] (A2);
            \path (A2) edge [bend left = 20, color=red] (B1);
            \path (A2) edge [color=red] (B2);
            \path (B2) edge [bend left = 20, color=red] (A3);
            \path (A3) edge [bend left = 20, color=red] (B2);
        \end{tikzpicture}
        \caption{}
        \label{fig:example_gcu:4}
    \end{subfigure}
    \hfill
    % En bas au milieu
    \begin{subfigure}{0.3\textwidth}
        \centering
        \begin{tikzpicture}[->, scale = 1.5]
            \node (A1) at (0,0) {$A_1$};
            \node (B1) at (1,.5) {$B_1$};
            \node (B2) at (1,-.5) {$B_2$};
            \node (C1) at (2,0) {$C_1$};
            \node (blank1) at (0,1)  {}; 
            \node (blank2) at (0,-1)  {};
            
            \path (A1) edge  (B2);
            \path (B1) edge  (A1);
            \path (B1) edge  (C1);
            \path (C1) edge  (B2);

        \end{tikzpicture}
        \caption{}
        \label{fig:example_gcu:5}
    \end{subfigure}
    \hfill
    % En bas à droite
    \begin{subfigure}{0.3\textwidth}
        \centering
        \begin{tikzpicture}[->, scale = 1.5]
            \node (A1) at (0,0) {$A_1$};
            \node (B1) at (1,.6) {$B_1$};
            \node (B2) at (1,0) {$B_2$};
            \node (B3) at (1,-.6) {$B_2$};
            \node (C1) at (2,0) {$C_1$};
            \node (blank1) at (0,1)  {}; 
            \node (blank2) at (0,-1)  {};
            
            \path (A1) edge [bend right = 10] (B3);
            \path (B1) edge [bend right = 10] (A1);
            \path (B1) edge [bend left = 10] (C1);
            \path (C1) edge [bend left = 10] (B3);
            \path (C1) edge [bend right =90] (A1);
            
            \path (A1) edge [bend left = 20, color=red] (B2);
            \path (B2) edge [bend left = 20, color=red] (A1);
            
            \path (B2) edge [bend left = 20, color=red] (C1);
            \path (C1) edge [bend left = 20, color=red] (B2);

        \end{tikzpicture}
        \caption{}
        \label{fig:example_gcu:6}
    \end{subfigure}
    \caption{On the first row (Figures \ref{fig:example_gcu:1}, \ref{fig:example_gcu:2} and \ref{fig:example_gcu:3}), three examples of C-DAG are given. On the second row (respectively, Figures \ref{fig:example_gcu:4}, \ref{fig:example_gcu:5} and \ref{fig:example_gcu:6}), we represents the corresponding unfolded graphs. The black arrows corresponds to $\Gt$, whereas the \textcolor{red}{red arrows} represent the "to choose" arrows. Lemma \ref{lemma:gm_subgraph_gcu} and Figure \ref{fig:example_gcu:5} show that there is no graph compatible with the C-DAG depicted in Figure \ref{fig:example_gcu:2} such that $A_1$ and $C_1$ are connected by a directed path. Similarly, Lemma \ref{lemma:gm_cup_g3_compatible} and Figure \ref{fig:example_gcu:6} show that there is no graph $\Gm$ compatible with the C-DAG depicted in Figure \ref{fig:example_gcu:3} such that $A_1 \in \Anc(C_1, \Gm)$. }
\end{figure}

\begin{remark}
    In general, the complexity of building $\Gcu$ is polynomial with respect to the size of $\Gc$ and the size of each cluster. It can be intractable if some clusters are very large. Nonetheless, in practice, we will work with $\G^C_{\leq 4}$. In this setting, $\Gcu$ is computed in polynomial time with respect to the size of $\Gc$. 
\end{remark}

\begin{lemma}
\label{lemma:gm_subgraph_gcu}
    Let $\Gc$ be an admissible cluster-DAG and $\Gcu$ be its corresponding unfolded graph. Then, any compatible graph $\Gm$ is a subgraph of $\Gcu$ up to a permutation of indices in each cluster.
\end{lemma}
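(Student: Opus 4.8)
The plan is to exhibit, for each cluster, a within-cluster permutation of the indices of $\Gm$ after which every edge of $\Gm$ becomes an edge of $\Gcu$. The engine is Lemma~\ref{lemma:gm_cup_g3_compatible}: viewing $\Gt$ and $\Gm$ on the common micro-vertex set, their union $\Gt \cup \Gm$ is a compatible graph and in particular acyclic. I would then fix a topological order $\prec$ of $\Gt \cup \Gm$ and relabel the vertices of each cluster $V^C$ by their $\prec$-rank inside $V^C$. Since $\Gt \subseteq \Gt \cup \Gm$, this order refines $\Gt$; and because $\Gt$ (hence $\Gcu$) is determined only up to a permutation of indices within each cluster, I may assume $\Gt$ and $\Gcu$ are built using exactly this labeling. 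Simultaneously, since $\Gm \subseteq \Gt \cup \Gm$, the order $\prec$ is also a topological order of $\Gm$, so passing from the Notation~\ref{notation:base} labeling of $\Gm$ to this one is precisely a permutation of indices inside each cluster, namely the permutation claimed by the statement.

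It then remains to check that each edge of the relabeled $\Gm$ lies in $\Gcu$, which I would organise by edge type. Bidirected edges are immediate: by item~\ref{def:gm_min:1} of Definition~\ref{def:gm_min}, $\Gt$ already contains every micro bidirected edge — inter-cluster (from $U^C \dashleftrightarrow V^C$) and intra-cluster (from $\dashleftselfloop V^C$) — compatible with $\Gc$, and compatibility of $\Gm$ forces any of its bidirected edges to correspond to such an edge of $\Gc$; hence it lies in $\Gt \subseteq \Gcu$. For an intra-cluster directed edge $V_i \rightarrow V_j$ of $\Gm$, compatibility (item~4 of Definition~\ref{def:compatible_graphs}) forces a self-loop $\leftselfloop V^C$ in $\Gc$, while $\prec$ gives $i < j$; so by item~\ref{def:gm_min:2} the edge is among the ``up-edges'' of $\Gt$ and again lies in $\Gcu$.

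The only case with genuine content is an inter-cluster directed edge $U_i \rightarrow V_j$ of $\Gm$. Here compatibility gives $U^C \rightarrow V^C \in \Gc$, so $U_i \rightarrow V_j$ is a candidate for $\E_{\text{to choose}}$ (Definition~\ref{def:unfolded_graph}), and it belongs to $\Gcu$ as soon as it does not create a cycle in $\Gt$. This is exactly where acyclicity of $\Gt \cup \Gm$ enters: if $U_i \rightarrow V_j$ closed a cycle in $\Gt$, then $\Gt$ would contain a directed path from $V_j$ to $U_i$, which together with the edge $U_i \rightarrow V_j$ present in $\Gm$ would form a cycle in $\Gt \cup \Gm$, contradicting Lemma~\ref{lemma:gm_cup_g3_compatible}. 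Thus the edge lies in $\E_{\text{to choose}} \subseteq \Gcu$. Collecting the three cases yields $\Gm \subseteq \Gcu$ under the chosen permutation.

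I expect the main obstacle to be not the cycle argument, which is short, but the bookkeeping in the first paragraph: making precise that a single topological order of $\Gt \cup \Gm$ can serve at once as the construction order for $\Gt$ (and therefore $\Gcu$) and as the relabeling of $\Gm$, and in particular that the indices $1$ and $\#V^C$ referenced by item~\ref{def:gm_min:3} are read off from $\prec$ consistently, so that the fixed graph being compared against really is $\Gcu$. A clean way to discharge this is to note that ``creates a cycle in $\Gt$'' is an isomorphism-invariant property, so that $\Gcu$ is well defined up to within-cluster permutation and we lose nothing by building it from $\prec$. Once this identification is set up, the three edge-type verifications are routine.
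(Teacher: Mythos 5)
Your proposal is correct and follows essentially the same route as the paper's proof: relabel each cluster's indices to agree with a topological order, split into bidirected, intra-cluster, and inter-cluster directed edges, and for the last case invoke Lemma~\ref{lemma:gm_cup_g3_compatible} to conclude that the edge cannot close a cycle in $\Gt$ and hence lies in $\E_{\text{to choose}}$. The only difference is bookkeeping — you order by a topological order of $\Gt \cup \Gm$ and discuss invariance of $\Gcu$ under within-cluster permutations explicitly, where the paper simply uses $\Gm$'s own topological order via Notation~\ref{notation:base} — which is a slightly more careful rendering of the same argument.
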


\begin{proof}
    Let $\Gm =\left( \mathcal{V}^m, \mathcal{E}^m  \right)$ be a compatible graph. By definition, we already know that $\mathcal{V}^m = \mathcal{V}_{\text{u}}$.  $\Gm$ is a DAG, thus, in each cluster, we can permute the indices of the vertices so that the topological order of $\Gm$ agrees with the order of the indices. Let $a$ be an arrow of $\Gm$. We distinguish three cases:
    \begin{itemize}
        \item If $a$ is a dashed-bidirected-arrow, then $a$ is also in $\Gcu$ because $\Gcu$ contains all possible dashed-bidirected-arrows.
        
        \item If $a$ corresponds to a self-loop $\leftselfloop V^C$ in $\Gc$. Necessarily, in $\Gm$, $a = V_i \rightarrow V_j$ with $i<j$. Thus, $a$ corresponds to an arrow added during step \ref{def:gm_min:2} of Definition \ref{def:gm_min}. Therefore, $a$ is also an arrow in $\Gcu$.
        
        \item Otherwise, $a$ corresponds to an arrow $U^C \rightarrow V^C$, with $U^C \neq V^C$. We distinguish two cases:
        \begin{itemize}
            \item If $a$ is added at step \ref{def:gm_min:3} of definition \ref{def:gm_min}, then $a$ is also an arrow in $\Gcu$.
            
            \item Otherwise, by Lemma \ref{lemma:gm_cup_g3_compatible}, we know that $\Gt \cup \Gm$ is compatible. Thus  $\Gt \cup \Gm$ is acyclic. Since $\Gt \cup a$ is a subgraph of $\Gt \cup \Gm$, we can conclude that $a$ does not create a cycle in $\Gt$. Thus, $a \in \E_{\text{"to choose"}}$. Therefore $a$ is  an arrow in $\Gcu$
        \end{itemize}
    \end{itemize}
    Hence, $\mathcal{E}^m \subseteq \mathcal{E}_{\text{u}}$. Therefore, $\Gm$ is a subgraph of $G_{\text{u}}$.
    
\end{proof}

Remark that 
    $\Gcu$ is not acyclic, except if $\Gc$ is acyclic. If $\Gc$ is acyclic, then $\Gcu$ is the compatible graph with as many arrows as possible. Thus, in this case, we recover the results of \cite{anand_causal_2023}.

\begin{remark}
    If $\mathcal{X}^C \notind_{\Gc} \mathcal{Y}^C \mid \mathcal{Z}^C$, then there exists a compatible graph that contains a connecting path. Since $\Gcu$ is a supergraph of this graph, the path also exists in $\Gcu$. Thus, d-separation in $\Gcu$ is sound.
\end{remark}

\begin{theorem}[Graphical Criterion]
\label{th:graphical_criterion}
    Let $\Gc = \left( \mathcal{V}^C, \mathcal{E}^C  \right)$ be an admissible cluster-DAG. Let $\Gt$ be its corresponding minimal compatible graph. Let $\Gcu$ be the corresponding unfolded graph. Let $\X^C,\Y^C$ and $\Z^C$ be pairwise distinct subsets of nodes of $\Gc$. Then the following properties are equivalent:
    
    \begin{enumerate}
        \item $\mathcal{X}^C \notind_{\Gc} \mathcal{Y}^C \mid \mathcal{Z}^C$, \label{th:graphical_criterion:1}
        
        \item $\Gcu$ contains a structure of interest $\sigma^m$ that connects $\X^m$ to $\Y^m$ under $\Z^m$ such that $\Gt \cup \sigma^m$ is acyclic. \label{th:graphical_criterion:2}
    \end{enumerate}
\end{theorem}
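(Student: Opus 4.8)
The plan is to reduce both directions to Theorem~\ref{th:new_d_sep} together with the two structural lemmas, Lemmas~\ref{lemma:gm_subgraph_gcu} and~\ref{lemma:gm_cup_g3_compatible}. The starting point is to recall, from the definition of macro d-connection and from~\eqref{eq:to_be_characterised}, that statement~\ref{th:graphical_criterion:1} is equivalent to the existence of a compatible graph $\Gm \in \C(\Gc)$ together with a structure of interest $\sigma^m \subseteq \Gm$ connecting $\X^m$ and $\Y^m$ under $\Z^m$. Hence it suffices to show that such a pair $(\Gm,\sigma^m)$ exists if and only if $\Gcu$ contains a structure of interest connecting $\X^m$ and $\Y^m$ under $\Z^m$ whose union with $\Gt$ is acyclic. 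The graph $\Gcu$ will serve as a universal supergraph of all compatible graphs, while the acyclicity constraint $\Gt \cup \sigma^m$ records exactly which substructures are realizable inside some compatible graph.

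For \ref{th:graphical_criterion:1} $\Rightarrow$ \ref{th:graphical_criterion:2}, I would start from such a pair $(\Gm,\sigma^m)$. By Lemma~\ref{lemma:gm_subgraph_gcu}, after permuting indices within each cluster we may assume $\Gm \subseteq \Gcu$, and in particular $\sigma^m \subseteq \Gm \subseteq \Gcu$. The crucial remark is that this relabeling is harmless for the connection data: since $\X^C,\Y^C,\Z^C$ are full clusters, the induced micro-sets $\X^m,\Y^m,\Z^m$ are invariant under permutations internal to each cluster, so every clause of Definition~\ref{def:connecting_structure_of_interest} (the intersections with $\X^m$ and $\Y^m$, the root condition $\Root(\sigma^m)\subseteq \Z^m\cup\X^m\cup\Y^m$, and the chain/fork condition) is preserved, and the relabeled $\sigma^m$ still connects $\X^m$ and $\Y^m$ under $\Z^m$. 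Moreover, the relabeling is precisely the one making the topological order agree with the index order, so the relabeled $\Gm$ is still compatible and obeys Notation~\ref{notation:base}; Lemma~\ref{lemma:gm_cup_g3_compatible} then gives that $\Gt \cup \Gm$ is compatible, hence acyclic, and since $\Gt \cup \sigma^m \subseteq \Gt \cup \Gm$ it is acyclic as well. This establishes~\ref{th:graphical_criterion:2}.

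For \ref{th:graphical_criterion:2} $\Rightarrow$ \ref{th:graphical_criterion:1}, the natural witness is $\Gm \coloneqq \Gt \cup \sigma^m$. By hypothesis $\Gm$ is acyclic, so the only remaining task is compatibility, which (beyond acyclicity) amounts to the biconditional clauses of Definition~\ref{def:compatible_graphs}. The macro-to-micro direction holds because $\Gt$ already realizes every arrow of $\Gc$ (by construction in Definition~\ref{def:gm_min}, including the cross-cluster arrows $U_1\to V_{\#V^C}$, the self-loop arrows, and all bidirected arrows). The micro-to-macro direction holds because every edge of $\sigma^m\subseteq\Gcu$ either lies in $\mathcal{E}_{\min}$ or is one of the arrows adjoined in Definition~\ref{def:unfolded_graph}, and in either case corresponds to an existing edge of $\Gc$; thus $\Gm$ introduces no spurious macro adjacency. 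Hence $\Gm \in \C(\Gc)$, and $\Gm$ contains the structure of interest $\sigma^m$ connecting $\X^m$ and $\Y^m$ under $\Z^m$; by Theorem~\ref{th:new_d_sep} this yields $\X^m \notind_{\Gm} \Y^m \mid \Z^m$, i.e.\ statement~\ref{th:graphical_criterion:1}.

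I expect the main obstacle to be the permutation step in the first direction: one must check that the phrase \emph{up to a permutation of indices} in Lemma~\ref{lemma:gm_subgraph_gcu} does not disturb the connection data carried by $\sigma^m$, and this rests entirely on $\X^C,\Y^C,\Z^C$ being whole clusters, so that $\X^m,\Y^m,\Z^m$ are permutation-invariant. A secondary point requiring care is confirming in the second direction that adjoining the edges of $\sigma^m$ to $\Gt$ creates no new macro adjacency, so that genuine compatibility (and not merely acyclicity) is obtained.
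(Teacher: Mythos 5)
Your proof is correct and takes essentially the same route as the paper's: the forward implication rests on Lemma~\ref{lemma:gm_subgraph_gcu} to embed $\sigma^m$ in $\Gcu$ and Lemma~\ref{lemma:gm_cup_g3_compatible} to get acyclicity of $\Gt \cup \sigma^m$ as a subgraph of the compatible graph $\Gt \cup \Gm$, and the converse takes $\Gt \cup \sigma^m$ itself as the witness compatible graph and invokes Theorem~\ref{th:new_d_sep}. Your explicit checks --- that the within-cluster permutation in Lemma~\ref{lemma:gm_subgraph_gcu} preserves $\X^m,\Y^m,\Z^m$ and the connection data, and that both directions of the compatibility biconditional hold for $\Gt \cup \sigma^m$ --- are points the paper leaves implicit, so your write-up is if anything slightly more careful.
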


\begin{proof}
Let us prove the two implications:
\begin{itemize}
    \item $\ref{th:graphical_criterion:1} \Rightarrow \ref{th:graphical_criterion:2}$: If $\mathcal{X}^C \notind_{\Gc} \mathcal{Y}^C \mid \mathcal{Z}^C$, then there exists a compatible graph $\Gm$ in which there exists a structure of interest $\sigma^m$ which connects $\X^m$ and $\Y^m$ under $\Z^m$. By Lemma \ref{lemma:gm_subgraph_gcu}, $\Gm$ is a subgraph of $\Gcu$. Thus $\sigma^m$ is also a structure of interest which connects $\X^m$ and $\Y^m$ under $\Z^m$ in $\Gcu$.
    
    By Lemma \ref{lemma:gm_cup_g3_compatible}, $\Gt \cup \Gm $ is a compatible graph. Moreover, $\Gt \cup \sigma^m \subseteq \Gt \cup \Gm $. Thus, by Proposition \ref{prop:remove_arrow}, $\Gt \cup \sigma^m$ is a compatible graph. Therefore,  $\Gt \cup \sigma^m$ is acyclic.

    \item $\ref{th:graphical_criterion:2} \Rightarrow \ref{th:graphical_criterion:1}$: $\Gt \cup \sigma^m$ is acyclic. By Lemma \ref{lemma:G3_compatible}, $\Gt$ is compatible. Moreover, all arrows from $\sigma^m$ come from $\Gcu$. Therefore,  $\Gt \cup \sigma^m$ is a compatible graph. By Theorem \ref{th:new_d_sep}, $\mathcal{X}^C \notind_{\Gt \cup \sigma^m} \mathcal{Y}^C \mid \mathcal{Z}^C$. Therefore, $\mathcal{X}^C \notind_{\Gc} \mathcal{Y}^C \mid \mathcal{Z}^C$. 
    
\end{itemize}
\end{proof}

\section{Solving Problem \ref{pb:main_goal}}

\begin{theorem}
\label{th:final_theorem}
    Let $\Gc = \left( \mathcal{V}^C, \mathcal{E}^C  \right)$ be an admissible cluster-DAG.  Let $\Gt$ be its corresponding minimal compatible graph. Let $\Gcu$ be the corresponding unfolded graph. Let $\X^C,\Y^C$ and $\Z^C$ be pairwise distinct subsets of nodes of $\Gc$. Let $\mathcal{A}$ and $\mathcal{B}$ be subsets of nodes of $\Gc$. Then the following properties are equivalent:
    
    \begin{enumerate}
        \item $\X^C \notind_{\Gc_{\overline{\mathcal{A}^C} \underline{\mathcal{B}^C}}} \Y^C \mid \Z^C$ \label{th:final_theorem:1}
        
        \item $\Gcu_{\overline{\mathcal{A}^m} \underline{\mathcal{B}^m}}$ contains a structure of interest $\sigma^m$ that connects $\X^m$ to $\Y^m$ under $\Z^m$ such that $\Gt \cup \sigma^m$ is acyclic. \label{th:final_theorem:2}
    \end{enumerate}
\end{theorem}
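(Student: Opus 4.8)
The plan is to adapt the proof of Theorem~\ref{th:graphical_criterion} almost verbatim, inserting the mutilation operation and checking that it commutes with the two graph operations on which that proof relies: passing to a subgraph and taking the union with $\Gt$. First I would unpack Statement~\ref{th:final_theorem:1} via its definition from Problem~\ref{pb:main_goal}: $\X^C \notind_{\Gc_{\overline{\A^C}\underline{\B^C}}} \Y^C \mid \Z^C$ means exactly that there exists $\Gm \in \C(\Gc)$ with $\X^m \notind_{\Gm_{\overline{\A^m}\underline{\B^m}}} \Y^m \mid \Z^m$. Since $\Gm_{\overline{\A^m}\underline{\B^m}}$ is itself an ADMG (mutilation preserves acyclicity), Theorem~\ref{th:new_d_sep} lets me replace this d-connection by the existence of a structure of interest $\sigma^m \subseteq \Gm_{\overline{\A^m}\underline{\B^m}}$ connecting $\X^m$ and $\Y^m$ under $\Z^m$. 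The theorem then reduces to comparing structures of interest living in $\Gm_{\overline{\A^m}\underline{\B^m}}$ with those living in $\Gcu_{\overline{\A^m}\underline{\B^m}}$.

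For \ref{th:final_theorem:1} $\Rightarrow$ \ref{th:final_theorem:2} I would take the witnessing $\Gm$ and $\sigma^m$. By Lemma~\ref{lemma:gm_subgraph_gcu}, after a permutation of indices inside each cluster $\Gm$ is a subgraph of $\Gcu$; since mutilation is defined through the sets $\A^m,\B^m$, which are unions of whole clusters and hence invariant under such permutations, mutilation commutes with the embedding and yields $\Gm_{\overline{\A^m}\underline{\B^m}} \subseteq \Gcu_{\overline{\A^m}\underline{\B^m}}$, so $\sigma^m \subseteq \Gcu_{\overline{\A^m}\underline{\B^m}}$. Because the conditions of Definition~\ref{def:connecting_structure_of_interest} are intrinsic to $\sigma^m$ (they involve only its own roots and the sets $\X^m,\Y^m,\Z^m$), $\sigma^m$ still connects $\X^m$ and $\Y^m$ under $\Z^m$. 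Acyclicity of $\Gt \cup \sigma^m$ follows as before: by Lemma~\ref{lemma:gm_cup_g3_compatible} the (permuted) $\Gt \cup \Gm$ is compatible and hence acyclic, and $\Gt \cup \sigma^m \subseteq \Gt \cup \Gm$.

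For the converse \ref{th:final_theorem:2} $\Rightarrow$ \ref{th:final_theorem:1} I would set $\Gm \coloneqq \Gt \cup \sigma^m$. Exactly as in Theorem~\ref{th:graphical_criterion}, $\Gt$ is compatible (Lemma~\ref{lemma:G3_compatible}), every arrow of $\sigma^m$ comes from $\Gcu$, and $\Gt \cup \sigma^m$ is acyclic by hypothesis, so $\Gm$ is a compatible graph. The one new point is that the chosen $\sigma^m$ must survive the mutilation of $\Gm$, i.e. $\sigma^m \subseteq \Gm_{\overline{\A^m}\underline{\B^m}}$. This holds because $\sigma^m \subseteq \Gcu_{\overline{\A^m}\underline{\B^m}}$ already contains no edge with an arrowhead into $\A^m$ nor a tail from $\B^m$, so mutilating $\Gm = \Gt \cup \sigma^m$ deletes only edges of the $\Gt$ part; formally, since mutilation distributes over union, $\Gm_{\overline{\A^m}\underline{\B^m}} = \Gt_{\overline{\A^m}\underline{\B^m}} \cup \sigma^m \supseteq \sigma^m$. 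Then $\sigma^m$ is a structure of interest connecting $\X^m$ and $\Y^m$ under $\Z^m$ inside $\Gm_{\overline{\A^m}\underline{\B^m}}$, so Theorem~\ref{th:new_d_sep} gives $\X^m \notind_{\Gm_{\overline{\A^m}\underline{\B^m}}} \Y^m \mid \Z^m$ with $\Gm \in \C(\Gc)$, which is Statement~\ref{th:final_theorem:1}.

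The only genuinely new difficulty compared with Theorem~\ref{th:graphical_criterion} is controlling how mutilation interacts with these two operations, and I expect the subtle point to be the commutation $\Gm_{\overline{\A^m}\underline{\B^m}} \subseteq \Gcu_{\overline{\A^m}\underline{\B^m}}$ through the index permutation of Lemma~\ref{lemma:gm_subgraph_gcu}: one must observe that $\A^m$ and $\B^m$ are permutation-invariant (being full clusters), so that \emph{which} edges get deleted does not depend on the chosen representative. Everything else is bookkeeping mirroring the unmutilated proof, the crux being the single identity $(\Gt \cup \sigma^m)_{\overline{\A^m}\underline{\B^m}} = \Gt_{\overline{\A^m}\underline{\B^m}} \cup \sigma^m$ used in the converse.
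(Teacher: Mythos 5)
Your proposal is correct and follows essentially the same route as the paper's own proof: both directions use Lemma~\ref{lemma:gm_subgraph_gcu} together with Lemma~\ref{lemma:gm_cup_g3_compatible} for \ref{th:final_theorem:1}~$\Rightarrow$~\ref{th:final_theorem:2}, and Lemma~\ref{lemma:G3_compatible} plus the observation that $\sigma^m$ survives the mutilation of $\Gt \cup \sigma^m$ for the converse. If anything, you are slightly more explicit than the paper on two points it leaves implicit --- that $\A^m$ and $\B^m$ are unions of whole clusters and hence invariant under the index permutation of Lemma~\ref{lemma:gm_subgraph_gcu}, and that mutilation distributes over the union $(\Gt \cup \sigma^m)_{\overline{\A^m}\underline{\B^m}} = \Gt_{\overline{\A^m}\underline{\B^m}} \cup \sigma^m$ --- but these are refinements of the same argument, not a different one.
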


\begin{proof}
Let us prove the two implications:

\begin{itemize}
    \item $\ref{th:final_theorem:1} \Rightarrow \ref{th:final_theorem:2}$: If $\X^C \notind_{\Gc_{\overline{\mathcal{A}^C} \underline{\mathcal{B}^C}}} \Y^C \mid \Z^C$, then there exists a compatible graph $\Gm$ such that $\Gm_{\overline{\mathcal{A}^m} \underline{\mathcal{B}^m}}$ contains a structure of interest $\sigma^m$ which connects $\X^m$ and $\Y^m$ under $\Z^m$. By Lemma \ref{lemma:gm_subgraph_gcu} $\sigma^m \subseteq \Gm_{\overline{\mathcal{A}^m} \underline{\mathcal{B}^m}} \subseteq \Gm \subseteq \Gcu$. Moreover, since $\sigma^m \subseteq \Gm_{\overline{\mathcal{A}^m} \underline{\mathcal{B}^m}}$, we know that $\sigma^m$ does not contain any incoming arrow in $\A^m$ and no outgoing arrow from $\B^m$. Therefore, $\sigma^m \subseteq \Gcu_{\overline{\mathcal{A}^m} \underline{\mathcal{B}^m}}$ and $\sigma^m$ connects $\X^m$ and $\Y^m$ under $\Z^m$.
    
    By Lemma \ref{lemma:gm_cup_g3_compatible}, $\Gt \cup \Gm $ is a compatible graph, thus acyclic. Moreover, $\Gt \cup \sigma^m \subseteq \Gt \cup \Gm $. Therefore,  $\Gt \cup \sigma^m$ is acyclic.
    
    \item $\ref{th:final_theorem:2} \Rightarrow \ref{th:final_theorem:1}$: $\Gt \cup \sigma^m$ is acyclic. By Lemma \ref{lemma:G3_compatible}, $\Gt$ is compatible. Moreover, all arrows from $\sigma^m$ come from $\Gcu$. Therefore,  $\Gt \cup \sigma^m$ is a compatible graph. 
    
    Moreover, since $\sigma^m \subseteq \Gcu_{\overline{\mathcal{A}^m} \underline{\mathcal{B}^m}}$, we know that $\sigma^m$ does not contain any incoming arrow in $\A^m$ and no outgoing arrow from $\B^m$. Thus, $ \sigma^m \subseteq (\Gt \cup \sigma^m)_{\overline{\mathcal{A}^m} \underline{\mathcal{B}^m}}$. Since $\sigma^m$ connects $\X^m$ and $\Y^m$ under $\Z^m$, we can conclude that $\X^m \notind_{(\Gt \cup \sigma^m)_{\overline{\mathcal{A}^m} \underline{\mathcal{B}^m}}} \Y^C \mid \Z^m$. 
    
    Therefore, $\X^C \notind_{\Gc_{\overline{\mathcal{A}^C} \underline{\mathcal{B}^C}}} \Y^C \mid \Z^C$.
\end{itemize}
\end{proof}

\begin{remark}
    Since all do-calculus rules can be written with d-separation under mutilations. Theorem \ref{th:final_theorem} gives a complete characterization of common do-calculus in cluster-DAGs.
\end{remark}

\section{Conclusion}
In this note, we provide a graphical criterion for identifying the total effect in cluster DAGs where cycles are allowed (i.e., the partition into clusters is not admissible). To the best of our knowledge, this is the first sound and complete criterion in this setting.

\bibliography{References}

\end{document}